\theoremstyle{plain}
\newtheorem{mainthm}{Theorem}
\newtheorem{maincor}[mainthm]{Corollary}
\newtheorem{theorem}{Theorem}[section]
\newtheorem{lemma}[theorem]{Lemma}
\newtheorem{proposition}[theorem]{Proposition}
\theoremstyle{definition}
\newtheorem{remark}[theorem]{Remark}
\newtheorem{example}[theorem]{Example}
\newcommand{\Dcf}{D^c\!f}
\newcommand{\mf}{\mathbf{f}}
\newcommand{\mh}{\mathbf{h}}
\newcommand{\cF}{{\mathcal F}}
\newcommand{\cG}{{\mathcal G}}
\newcommand{\cM}{{\mathcal M}}
\newcommand{\cR}{{\mathcal R}}
\newcommand{\cU}{{\mathcal U}}
\newcommand{\cV}{{\mathcal V}}
\newcommand{\cW}{{\mathcal W}}
\newcommand{\CC}{{\mathbb C}}
\newcommand{\DD}{{\mathbb D}}
\newcommand{\HH}{{\mathbb H}}
\newcommand{\NN}{{\mathbb N}}
\newcommand{\PP}{{\mathbb P}}
\newcommand{\RR}{{\mathbb R}}
\renewcommand{\SS}{{\mathbb S}}
\newcommand{\TT}{{\mathbb T}}
\newcommand{\ZZ}{{\mathbb Z}}
\newcommand{\dist}{\operatorname{dist}}
\newcommand{\trace}{\operatorname{trace}}
\newcommand{\SL}{\operatorname{SL}}
\newcommand{\PSL}{\operatorname{PSL}}
\newcommand{\Id}{\operatorname{Id}}
\newcommand{\id}{\operatorname{id}}
\newcommand{\Stab}{\operatorname{Stab}}
\newcommand{\volume}{\operatorname{vol}}
\title{Hyperbolicity and rigidity\\ for fibred partially hyperbolic systems
\thanks{Project supported by Fondation Louis D. -- Institut de France (project coordinated by M. Viana).}}
\author{
Sankhadip Chakraborty\\
\thanks{Supported by a scholarship from CNPq - National Research Council of Brazil.}\\
Instituto de Matemática Pura e Aplicada\\
Estrada Dona Castorina 110, 22460-320 Rio de Janeiro, Brazil \\
\texttt{sankha01@impa.br} \\

   \And
Marcelo Viana\\
\thanks{Partially supported by CNPq and FAPERJ - State Research Agency of Rio de Janeiro.}
Instituto de Matemática Pura e Aplicada\\
Estrada Dona Castorina 110, 22460-320 Rio de Janeiro, Brazil \\
\texttt{viana@impa.br} \\
}
\begin{document}
\maketitle
\begin{abstract}
Every volume-preserving centre-bunched fibred partially hyperbolic system with 2-dimensional centre
either
(1) has two distinct centre Lyapunov exponents,
or
(2) exhibits an invariant continuous line field (or pair of line fields) tangent to the centre leaves,
or
(3) admits a continuous conformal structure on the centre leaves invariant under both the dynamics and
the stable and unstable holonomies.
The last two alternatives carry strong restrictions on the topology of the centre leaves:
(2) can only occur on tori, and for (3) the centre leaves must be either tori or spheres.
Moreover, under some additional conditions, such maps are rigid, in the sense that they are
topologically conjugate to specific algebraic models.
When the system is symplectic (1) implies that the centre Lyapunov exponents are non-zero,
and thus the system is (non-uniformly) hyperbolic.
\end{abstract}

\keywords{hyperbolicity, rigidity, partially hyperbolic system, volume-preserving diffeomorphism,
symplectic diffeomorphism, holonomy map}

\tableofcontents

\section{Introduction}\label{s.introduction}

In this paper we investigate the behaviour of the Lyapunov exponents of volume-preserving and symplectic
diffeomorphisms under small modifications of the dynamical system.
We are especially concerned with the following pair of fundamental questions:
\emph{How often can one perturb the diffeomorphism in order to make it (non-uniformly) hyperbolic?}
\emph{Which obstructions are there to the existence of such perturbations?}

In a nutshell, we conclude that non-uniformly hyperbolic systems are prominent
in the contexts we deal with and,
indeed, diffeomorphisms that are not approximated by non-uniformly hyperbolic
ones present very rigid features.
Our arguments benefit from a combination of methods, both classical and new, that have much broadened
this field of research in the last couple of decades or so.

The concept of Lyapunov exponents originated from the stability theory of differential equations developed
by A.M. Lyapunov~\cite{Lya92} around the end of the 19th century.
Consider a differential equation of the form
\begin{equation}\label{eq.quasi-linear}
x'= L(t)x + R(t,x),
\end{equation}
where $L(t):\RR^d\to\RR^d$ is a linear map and $R(t,x)$ is a non-linear perturbation of order greater than $1$.
Fix any $t_0$ and for each $v \neq 0$ denote by $x_v(\cdot)$ the solution of the \emph{linearised equation}
$x'=L(t)x$ with initial condition $x_v(t_0)=v$. The stability theorem of Lyapunov asserts that if the
\emph{Lyapunov exponent function}
\begin{equation}\label{eq.limsup}
\lambda(v) = \limsup_{t\to\infty} \frac 1t \log \|x_v(t)\|
\end{equation}
is negative for every $v \neq 0$ then, under an additional condition called \emph{Lyapunov regularity},
the solution $x(t) \equiv 0$ is exponentially stable for the equation \eqref{eq.quasi-linear}. 
See Barreira, Pesin~\cite{BaP02} for a detailed presentation of this topic.

The flow of a differentiable vector field may always may be written in the form \eqref{eq.quasi-linear}
around any given trajectory. Furstenberg, Kesten~\cite{FK60} proved that the limit in
\eqref{eq.limsup} exists (for all $v \neq 0$) for almost every trajectory,
relative to any measure \footnote{In this paper all measures are finite Borel measures.}
invariant under the flow.
A few years later, Oseledets~\cite{Ose68} proved that Lyapunov regularity also holds for
almost every trajectory, again with respect to any invariant measure.
Corresponding facts for discrete-time dynamical systems, such as diffeomorphisms on manifolds, 
follow easily. These results brought the subject of Lyapunov exponents firmly to the realm of
ergodic theory, where it has prospered since.

The next major step, initiated by Pesin~\cite{Pes76,Pes77}, was to develop the non-linear theory, 
namely, to establish that \emph{in the absence of vanishing Lyapunov exponents} the dynamical
system must exhibit special geometric features, including the existence of stable and unstable
sets that are smooth embedded disks at almost every point.
See also Ruelle~\cite{Rue81}, Fathi, Herman, Yoccoz~\cite{FHY83}, and Pugh, Shub~\cite{PSh89}.
Such a geometric structure is at the basis of several deep
results about \emph{non-uniformly hyperbolic systems}, that is,
whose Lyapunov exponents are non-zero almost everywhere,
by Pesin~\cite{Pes77}, Katok~\cite{Kat80c}, Ledrappier~\cite{Led84a}, Ledrappier, Young~\cite{LeY85a},
Barreira, Pesin, Schmeling~\cite{BPS99}, Shub, Wilkinson~\cite{SW00} and others.

All this points at the following central question:
\emph{Is every dynamical system approximated by a non-uniformly hyperbolic one?}
In other words, can one always get rid of zero Lyapunov exponents by slightly perturbing
the dynamical system?

Work of Herman (see Yoccoz~\cite[Section~4]{Yoc92}) implies that the answer may fail to be positive
in the context of volume-preserving diffeomorphisms.
Also, results of Mañé, Bochi~\cite{Man84,Boc02} show that vanishing Lyapunov exponents are actually
quite common
among $C^1$ area-preserving surface diffeomorphisms.
The latter was extended by Bochi, Viana~\cite{BcV05,Boc09} to volume-preserving and symplectic
diffeomorphisms in any dimension.

On the other hand, general perturbative techniques have been developed for removing vanishing
Lyapunov exponents.
See, in particular, Herman~\cite{Her83b}, Shub, Wilkinson~\cite{SW00}, Avila, Bochi~\cite{AvB02},
Dolgopyat, Pesin~\cite{DoP02}, Baraviera, Bonatti~\cite{BB03}, Bochi, Fayad, Pujals~\cite{BFP06},
and Marín~\cite{Mar16}.
For more references and an extended discussion, check Bochi, Viana~\cite{BcV04} and
Bonatti, Díaz, Viana~\cite[Section~12.5]{Beyond}.

An alternative approach which has been particularly fruitful in recent years is to deal with the
phenomenon of zero Lyapunov exponents by means of the so-called Invariance Principle,
formulated by Ledrappier~\cite{Led86} and Bonatti, Gomez-Mont, Viana~\cite{BGV03} for linear cocycles,
and Avila, Viana~\cite{Extremal} in the full non-linear setting.
Roughly speaking, the Invariance Principle asserts that systems whose Lyapunov exponents
do vanish must satisfy certain stringent conditions that can often be excluded a priori
for different reasons, for instance, topological.
Among the main applications, let us mention Hertz, Hertz, Tahzibi, Ures~\cite{HHTU12},
Avila, Viana, Wilkinson~\cite{AVW15,AVW22}, and Viana, Yang~\cite{ViY13}.
Recently, Butler, Xu~\cite{BuX18} studied the Lyapunov exponents of partially hyperbolic
diffeomorphisms along the stable (respectively unstable) bundle, finding conditions under
which the extremal exponents coincide.

Partial hyperbolicity (see Section~\ref{s.partially_hyperbolic} for definitions)
provides a particularly convenient context for  studying the persistence of zero Lyapunov exponents.
On the one hand, in that setting one needs only consider the system's Lyapunov exponents along
the centre bundle, $E^c$, as the exponents along the stable and unstable bundles, $E^s$ and $E^u$,
are clearly bounded from zero. On the other hand, the geometric structure exhibited by
partially hyperbolic systems, especially their admitting invariant stable and unstable foliations
tangent to $E^s$ and $E^u$, respectively, makes them particularly suited to the  approaches
mentioned before, particularly to the application of the Invariance Principle.

In this paper we deal with \emph{fibred partially hyperbolic systems} in the sense of
Avila, Viana, Wilkinson~\cite[Section~3.2]{AVW22} (see also Avila, Viana~\cite[Section~6]{AvV20}).
By this we mean that the diffeomorphism $f:N \to N$ is partially hyperbolic and there exists an
$f$-invariant continuous fibre bundle  $\pi:N\to B$ whose fibres are $C^1$ sub-manifolds tangent
to the centre bundle $E^c$ of $f$.
In all the cases we shall consider here, the fibres will be modelled after a compact orientable surface $S$.
Moreover, the diffeomorphism will be taken to be centre-bunched and to preserve some measure in
the Lebesgue class of $N$.

Our first conclusion is that the topology of the fibre $S$ has profound implications on the dynamics of
such maps and, in particular, on their Lyapunov exponents.
Indeed, in Theorem~\ref{theorem_general} we find that if the genus of $S$ is strictly greater
than $1$  then the centre Lyapunov exponents of $f$ must be distinct.
In particular, symplectic fibred partially hyperbolic systems are necessarily hyperbolic unless $S=\SS^2$
or $S=\TT^2$. A similar phenomenon was highlighted by Avila, Viana~\cite[Theorem~6.6]{Extremal}
in a different setting.

The arguments in the proof of Theorem~\ref{theorem_general} also lead to information on the dynamics of
the exceptional diffeomorphisms for which the centre Lyapunov exponents coincide.
For this we restrict ourselves to a subclass of fibred partially hyperbolic systems, namely,
perturbations of certain partially hyperbolic skew-product maps.
By a \emph{partially hyperbolic skew-product} we mean a partially hyperbolic centre-bunched
diffeomorphism of the form 
\begin{equation}\label{eq.skew}
f_0:M \times S \to M \times S, \quad (x,v) \mapsto (g_0(x),\Gamma_x(v))
\end{equation}
where $g_0:M\to M$ is a transitive Anosov diffeomorphism on a compact manifold $M$,
each $\Gamma_x:S \to S$ is a diffeomorphism on a compact orientable surface $S$, 
and the centre bundle of $g$ coincides with the vertical direction $\{0\} \times TS \subset T(M\times S)$.
It is assumed that $f_0$ preserves a measure $\mu$ in the Lebesgue class of $M\times S$,
and the results that follow concern the dynamics of nearby $\mu$-preserving maps.

In Theorem~\ref{theorem_torus} we take $S$ to be the torus $\TT^2$,
and $f_0:M\times\TT^2 \to M \times \TT^2$ to be an \emph{elliptic affine extension} of $g_0$,
that is, a diffeomorphism of the form
\begin{equation}\label{eq.affine_extension}
f_0(x,v) = \left(g_0(x), L_0 v+w_0(x)\right),
\end{equation}
where $L_0$ is any elliptic element of $\SL(2,\ZZ)$.
We prove that every nearby accessible $\mu$-preserving map $f$ which is not hyperbolic must be
topologically conjugate to an elliptic affine extension of $g_0$,
unless it admits some invariant line field or some pair of transverse line fields.
We also check that the latter alternative may be excluded in some cases (Corollary~\ref{corollaryD}),
but not always (Example~\ref{ex.linefield}).

In Theorem~\ref{theorem_sphere} we take $S$ to be the sphere $\SS^2$,
and $f_0:M \times \SS^2 \to M \times \SS^2$ to be a \emph{Möbius extension} of $g_0$, that is,
a diffeomorphism of the form
\begin{equation}\label{eq.Mobius_extension}
f_0(x,v) = (g_0(x),\zeta_x(v))
\end{equation}
where each $\zeta_x(v) = (a_xv+b_x)/(c_xv+d_x)$ is a Möbius transformation, viewed as a 
diffeomorphism of the sphere via the stereographic projection $P:\SS^2 \to \CC\cup\{\infty\}$.
Then we prove that if a nearby accessible $\mu$-preserving map $f$ is not hyperbolic then
it must be topologically conjugate to a Möbius extension of $g_0$.

This manuscript is organised as follows. In Section~\ref{s.statement} we give the precise statements
of all these results, including the formal definitions of the notions involved.
Section~\ref{s.preliminaries} collects a few main tools from the literature that are used in the proofs of our results.
Theorem~\ref{theorem_general} and Corollary~\ref{corollary_general} are proved in Section~\ref{s.theorem_general}.
In Section~\ref{s.density_accessibility} we check that the assumptions of Theorem~\ref{theorem_torus}
are very common. The material in there is not used in the proof, and may be skipped at a first reading.
Theorem~\ref{theorem_torus} is proved in Section~\ref{s.theorem_torus} and 
the proof of Corollary~\ref{corollaryD} is given in Section~\ref{s.line_fields}.
Theorem~\ref{theorem_sphere} is proved in Section~\ref{s.theorem_sphere}.

\paragraph{Acknowledgements} This paper corresponds to the first author's doctoral thesis,
presented at IMPA in late 2021. We are most grateful to committee members L. Backes, L. Lomonaco,
K. Mar{\'\i}n, K. War and J. Yang for numerous comments and suggestions that greatly helped 
improve the presentation. A discussion with J.V. Pereira is also gratefully acknowledged.

\section{Definitions and statement of results}\label{s.statement}

This section contains the statements of our main results, including the precise definitions of the notions involved.

\subsection{Partially hyperbolic diffeomorphisms}\label{s.partially_hyperbolic}

A diffeomorphism $f:N\to N$ on a compact manifold $N$ is \emph{partially hyperbolic} if the tangent space
$TN$ admits a non-trivial $Df$-invariant continuous splitting $TN=E^u\oplus E^c\oplus E^s$ such that:
\begin{itemize}
\item there are positive continuous functions $\nu$, $\widehat{\nu}$, $\gamma$,
and $\widehat{\gamma}$ on $M$ satisfying 
\[
\nu < 1 < \widehat{\nu}^{-1} \text{ and } \nu<\gamma<\widehat{\gamma}^{-1}<\widehat{\nu}^{-1};
\]
\item there is a Riemannian norm $\|\cdot\|$ on $M$ such that for any unit vector $v_p\in T_p(M)$, we have
\begin{equation}\label{eq.partially_hyperbolic}
\begin{array}{cc}
\|Df_p(v_p)\| < \nu(p) & \text{if } v_p\in E^s(p) \\
\gamma(p) < \|Df_p(v_p)\| < \widehat{\gamma}(p)^{-1} & \text{if } v_p\in E^c(p)\\
\widehat{\nu}(p)^{-1} < \|Df_p(v_p)\| & \text{if } v_p\in E^u(p).
\end{array}
\end{equation}
\end{itemize}
We call $f$ \emph{centre-bunched} if the functions $\nu$, $\widehat\nu$, $\gamma$, and 
$\widehat\gamma$ can be chosen to satisfy
\begin{equation}\label{eq.center_bunched}
\nu<\gamma\widehat{\gamma} \text{ and }\widehat{\nu}<\gamma\widehat{\gamma}
\end{equation}
at every point.

$E^u$ and $E^s$ are called, respectively, the \emph{unstable bundle} and the \emph{stable bundle} of $f$.
There exist $f$-invariant foliations $\cF^u$ and $\cF^s$ tangent to $E^u$ and $E^s$, respectively,
at every point. Moreover, both the \emph{unstable foliation} $\cF^u$ and the \emph{stable foliation} 
$\cF^s$ are unique. See~\cite{BP74,HPS77}.
$E^c$ is called the \emph{centre bundle} of $f$. It need not be integrable, in general.
One calls $f$ \emph{dynamically coherent} if there exist $f$-invariant foliations $\cF^{cu}$
and $\cF^{cs}$ tangent at every point to $E^{u}\oplus E^c$ and $E^{c}\oplus E^s$, respectively.
Intersecting their leaves, one obtains an $f$-invariant \emph{centre foliation} $\cF^c$
tangent to $E^c$ at every point. See~\cite{GPS94,BuW10}.
A partially hyperbolic diffeomorphism $f:N\to N$ is said to be \emph{accessible} if any
two points $p$ and $q$ in $N$ are connected by some \emph{$su$-path} in $N$, that is,
some piecewise differentiable oriented curve each of whose (finitely many) legs is
contained in a single leaf of either $\cF^u$ or $\cF^s$.

\subsection{Lyapunov exponents}\label{s.lyapunov}

The \emph{Lebesgue class} of a compact manifold $N$ is the set of measures on $N$ equivalent to one
given by the integration of any volume form (this does not depend on the choice of the form).
We call a diffeomorphism $f:N \to N$ \emph{volume-preserving} if it preserves some measure $\mu$
in the Lebesgue class, and we call it \emph{$\mu$-preserving} if we want to specify that measure.

The theorem of Oseledets~\cite{Ose68} asserts that for $\mu$-almost every point $p\in N$ there
exist $k(p)\in\NN$, real numbers $\lambda_1(p)>\lambda_2(p)>\cdots>\lambda_{k(p)}(p)$ and a
$Df$-invariant splitting $T_p M=E^1_p \oplus E^2_p \oplus \cdots \oplus E^{k(p)}_p$ such that
\[
\lim_{|n|\to\infty}\frac{1}{n}\log \|Df^n_p(v_p)\|=\lambda_j(p)\text{ for all non-zero } v_p\in E^j_p.
\]
The numbers $\lambda_j(p)$ are called the \emph{Lyapunov exponents} and the 
$E^j_p$ are the \emph{Oseledets spaces} of $f$ at $p$.
When the system $(f,\mu)$ is ergodic the functions 
$p \mapsto \kappa(p), \lambda_j(p), \dim E^j_p$ are constant on a full $\mu$-measure set.
We use $\theta_1(p) \ge \cdots  \ge \theta_{\dim N}(p)$ to denote the Lyapunov exponents
counted with multiplicity: the \emph{multiplicity} of each $\lambda_j(p)$ is the dimension of
the subspace $E^j_p$, and so this means that we have $\theta_i(f,p) = \lambda_j(f,p)$ for $\dim E_p^j$
different values of $i$. The map $f$ is said to be \emph{non-uniformly hyperbolic}
for $\mu$ if the set of points where the Lyapunov exponents are all non-zero has
full $\mu$-measure. 

A \emph{symplectic form} on the manifold $N$ is a closed non-degenerate differential $2$-form $\omega$ on $N$.
Such a form exists only if the dimension of $N$ is even,
and then $\omega^{\dim N/2}$ is a volume form on $N$. The associated volume measure will be denoted as $\volume$. A diffemorphism $f:N \to N$ is said
to be \emph{$\omega$-symplectic} if it preserves $\omega$ and, consequently,
the measure $\volume$. In this setting, the Lyapunov exponents have the
following symmetry property: at $\volume$-almost every point,
\begin{equation}\label{eq.symmetric}
\theta_j(f,q)+\theta_{\dim N+1-j}(f,q) = 0 \text{ for every } j=1, \dots, \dim N/2.
\end{equation}

If $f:N\to N$ is partially hyperbolic and symplectic then
(see \cite[Section~4]{BcV04})
$\dim E^u=\dim E^s$ and $E^u\oplus E^s$ coincides with the symplectic orthogonal
of the centre bundle $E^c$:
$$
\omega_p(u_1,u_2)=0 \text{ for all } u_2 \in E^c_p \quad\Leftrightarrow\quad u_1 \in E^u_p \oplus E^s_p.
$$
In particular, the restriction $\omega^c = \left.\omega\right|_{E^c}$ is non-degenerate at every point.
Thus, assuming the diffeomorphism is dynamically coherent, $\omega^c$ defines a symplectic form 
(an area form) on each centre leaf. Clearly, this symplectic structure is preserved by the restriction
of $f$ to centre leaves.

\subsection{Fibred partially hyperbolic systems}\label{s.fibred}

Recall that we call \emph{fibred partially hyperbolic system} on a compact manifold $N$ any partially
hyperbolic diffeomorphism $f:N \to N$ such that there exists an $f$-invariant continuous fibre bundle
$\pi:N\to B$ whose fibres are $C^1$ sub-manifolds tangent to the centre bundle $E^c$ of $f$.
Then $f$ is dynamically coherent, with the fibration as a centre foliation.
In all the cases we consider, the fibres are modelled after a compact orientable surface $S$.

Given $r\ge 1$ and a measure $\mu$ in the Lebesgue class, we shall denote by $\cF^r_\mu(N,S)$
the space of all $\mu$-preserving centre-bunched $C^r$ fibred partially hyperbolic systems.
Analogously, given $r\ge 1$ and a symplectic form $\omega$, we shall denote by
$\cF^r_\omega(N,S)$ the space of all $\omega$-symplectic centre-bunched $C^r$ fibred partially
hyperbolic systems. It is clear that $\cF^r_\omega(N,S)\subset\cF^r_\mu(N,S)$ if $\mu=\volume$
is the volume measure induced by $\omega$.

\begin{example}\label{ex.skew_product}
Let $f_0:M \times S \to M \times S$ be a $C^r$ partially hyperbolic skew-product,
as defined in \eqref{eq.skew}. If $f_0$ preserves a probability measure $\mu$ in the 
Lebesgue class of $M\times S$ then $f_0$ is in the interior of $\cF^r_\mu(M\times S,S)$
among all $\mu$-preserving maps. Analogously, if $f_0$ preserves a symplectic form $\omega$
on $M \times S$ then $f_0$ is in the interior of $\cF^r_\omega(M\times S,S)$ among all
$\omega$-symplectic maps.

Indeed, let $f:M\times S\to M\times S$ be any diffeomorphism in a small $C^r$-neighbourhood
of $f_0$. By normal hyperbolicity theory (see \cite[Theorem~4.1]{HPS77}), $f$ is partially
hyperbolic and dynamically coherent, with a centre foliation $\cF^c$ whose leaves are
uniformly $C^r$-close to the leaves of the centre foliation
$\left\{\{x\}\times S: x \in M\right\}$ of the unperturbed diffeomorphism $f_0$. 
Moreover (see \cite[Theorems~7.1 and~7.4]{HPS77}), the two centre foliations are conjugate,
in the sense that there exists a homeomorphism $H_f:M\times S \to M \times S$ that sends
each $\{x\}\times S$ to a centre leaf $\cF^c_x$ of $f$, in such a way that
\begin{equation}\label{eq.leaf_conjugacy}
f \left(H_f(\{x\}\times S)\right) = H_f \left(f_0(\{x\}\times S)\right).
\end{equation}
The \emph{leaf conjugacy} $H_f$ is not unique, but the correspondence $x \mapsto \cF^c_x$
defined in this way does not depend on the choice of $H_f$. Moreover, the restriction of
$H_f$ to each $\{x\}\times S$ is a $C^1$ diffeomorphism onto $\cF^c_x$, and these leaf
derivatives vary continuously on $M \times  S$. See \cite{PSW12}.
In particular, the leaves of $\cF^c$ are the fibres of a continuous fibre bundle
$\pi:M \times  S \to M$. Since centre-bunching is a $C^1$-open condition, we also have
that every diffeomorphism $f$ close to $f_0$ is centre-bunched. This proves the claim.

Related remarks were made in \cite[Section~3.2]{AVW22}, \cite[Example~2.7]{ASV13}, and \cite[Section~3]{Mar16}.  We also point out that, by \cite[Theorem~6.1]{AvV20},
stably accessible diffeomorphisms are dense in a neighbourhood of $f_0$.
\end{example}

\begin{mainthm}\label{theorem_general}
If $\mu$ is a measure in the Lebesgue class of $N$ then every accessible $f \in \cF_\mu^r(N,S)$,
$r\ge 2$ satisfies at least one of the following conditions:  
\begin{itemize}
\item[(1)] the centre Lyapunov exponents of $f$ are distinct,
    and they are continuous at $f$ as functions of the dynamical system;
\item[(2)] $S=\TT^2$ and the centre bundle $E^c$ of $f$ contains an invariant continuous line
    field or an invariant pair of transverse continuous line fields;
    \item[(3)] $S=\SS^2$ or $S=\TT^2$ and there exists a continuous conformal structure on the
    centre leaves invariant under $f$ and under its stable and unstable holonomies.
\end{itemize}
\end{mainthm}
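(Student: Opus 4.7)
The plan is to apply the Invariance Principle to the projectivized derivative cocycle on the centre bundle. Set $A = Df|_{E^c}$, a two-dimensional linear cocycle over $f$; centre-bunching guarantees that $A$ is fibre-bunched, so by the Avila--Santamaria--Viana construction it admits H\"older stable and unstable holonomies, which descend to the projective bundle $\PP(E^c)$. If the two centre Lyapunov exponents of $f$ coincide, then every Lyapunov exponent of the induced cocycle on $\PP(E^c)$ vanishes, and the Invariance Principle produces a measurable family of probability measures $\{m_x\}_{x\in N}$ on the fibres of $\PP(E^c)$ that is invariant under the cocycle dynamics and under both stable and unstable holonomies.

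Next I would use accessibility together with centre-bunching to upgrade this measurable invariance to continuity, following the strategy of Avila--Viana--Wilkinson: $su$-paths allow us to transport the datum $m_x$ to every point of $N$, and the fibre-bunched regularity of the holonomies guarantees that the resulting map $x \mapsto m_x$ is continuous.

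With continuity in hand, I would classify the measures $m_x$ on $\PP(E^c_x) \cong \RR\PP^1$. Either $m_x$ is supported on at most two points, in which case its support assembles into an invariant continuous line field, or pair of transverse line fields, inside $E^c$; or else the stabilizer of $m_x$ in $\PSL(2,\RR)$ contains an elliptic one-parameter subgroup, and $m_x$ is then equivalent to a Euclidean structure on $E^c_x$ up to scale, that is, a conformal structure. The first alternative yields case (2): the line fields restrict to line fields on the tangent bundle $TS$ of each fibre, so $\chi(S) = 0$ and therefore $S = \TT^2$. The second alternative yields case (3); the restriction $S \in \{\SS^2,\TT^2\}$ then follows from uniformization, since a continuous conformal structure invariant under a non-trivially acting dynamics forces $S$ to admit an infinite group of conformal automorphisms, which fails in genus $\ge 2$.

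If none of the invariant measurable structures above can exist, then by the Invariance Principle the two centre exponents are distinct, and their continuity at $f$ as functions of the dynamical system follows from the standard continuity results for simple Lyapunov spectra of centre-bunched (equivalently, fibre-bunched) cocycles. The main obstacle I foresee is the upgrade from measurable to continuous invariance of the family $\{m_x\}$: this is precisely the point at which accessibility, fibre-bunching, and the structure of holonomy-invariant measures must be combined carefully, since the Invariance Principle in its bare form only delivers measurable invariance, while the statement of the theorem demands genuinely continuous line fields or a genuinely continuous conformal structure.
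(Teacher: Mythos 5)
Your high-level architecture matches the paper's: apply the Invariance Principle to the projectivised centre derivative cocycle, obtain a continuous bi-invariant disintegration $\{m_q\}$, and then split into cases according to the structure of the conditional measures, deriving the line fields or the conformal structure together with the topological restriction on $S$. The continuity upgrade via accessibility (referencing ASV/AVW) and the Poincar\'e--Hopf argument for case (2) are both as in the paper.

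The genuine gap is in your classification of the conditionals $m_q$ on $\PP(E^c_q)\cong\RR\PP^1$. You propose the dichotomy ``$m_q$ supported on at most two points'' versus ``$\operatorname{Stab}(m_q)$ contains an elliptic one-parameter subgroup,'' but this is not a dichotomy and the second alternative need not hold. A probability measure on $\RR\PP^1$ with no atom of mass $\ge 1/2$ typically has \emph{trivial} stabiliser in $\PSL(2,\RR)$ (for instance, $0.4\,\delta_{p_0}+0.3\,\delta_{p_1}+0.3\,\delta_{p_2}$ for three distinct points in generic position), and nothing in the Invariance Principle or the holonomy invariance forces $m_q$ to be rotationally symmetric. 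There are also measures that are neither supported on $\le 2$ points nor small-atomic (an atom of mass $0.6$ plus a diffuse part), which your dichotomy misses entirely. The correct split, and the one the paper uses, is ``some atom has mass $\ge 1/2$'' versus ``all atoms have mass $<1/2$.'' In the first case one extracts the large atom(s) — not the full support — to build the line field or the transverse pair. In the second case the key tool is the Douady--Earle \emph{conformal barycentre} (\cite{DE86}), which attaches to \emph{any} such measure a point of $\HH$, equivariantly for $\PSL(2,\RR)$ and continuously in the weak$^*$ topology; the conformal structure on $E^c_q$ is then the one fixed by the stabiliser of this barycentre, not of the measure itself. Without this construction there is no way to pass from a general holonomy-invariant disintegration to a conformal structure, so your proof of alternative (3) does not go through as written.
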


A few comments are in order concerning the three alternatives in the conclusion of the theorem.
Firstly, under additional
assumptions (pinching, existence of periodic points) one can ensure that every diffeomorphism in
$\cF_\mu^r(N,S)$ or $\cF_\omega^r(N,S)$ is $C^r$-approximated by one that satisfies condition (1).
This follows from the methods developed in \cite[Theorems~A~and~B]{Mar16} and \cite[Corollary~1]{LMY18}
but we shall not detail the arguments here.
Our next main goal will be to characterise the third alternative in the theorem more precisely.
Indeed, we shall see in Sections~\ref{s.torus} and~\ref{s.sphere} that diffeomorphisms as in (3)
are quite rigid. 
For that we shall restrict our setting somewhat in each of the two cases $S=\SS^2$ and $S=\TT^2$.
Example~\ref{ex.linefield} below shows that alternative (2) may also occur.
At this point, it is not clear whether a rigidity statement holds in that setting as well.
But in Section~\ref{s.line_fields} we show how this alternative can be excluded in some cases.

Theorem~\ref{theorem_general} applies, in particular, to the volume measure $\volume$ associated
to any given symplectic form $\omega$.
Besides, in the symplectic case the conclusions are a bit stronger:

\begin{maincor}\label{corollary_general}
If $f\in\cF_\omega^r(N,S)$ for some symplectic form $\omega$, alternative (1) in
Theorem~\ref{theorem_general} implies that $f$ is non-uniformly hyperbolic for the invariant
measure $\volume$, and alternative (3) implies that there exists a continuous Riemannian metric
on the centre leaves invariant under $f$ and under the stable and unstable holonomies.
\end{maincor}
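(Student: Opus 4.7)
My plan is to handle the two implications separately, leveraging the additional symplectic structure to upgrade the conclusions of Theorem~\ref{theorem_general}.

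For the first assertion I would invoke the symplectic symmetry of Lyapunov exponents, \eqref{eq.symmetric}. Since $f$ is symplectic and partially hyperbolic one has $\dim E^u = \dim E^s$, and together with $\dim E^c = \dim S = 2$ this gives $\dim N / 2 = \dim E^u + 1$. The dominated splitting $E^u \oplus E^c \oplus E^s$ forces the two centre Lyapunov exponents $\theta^c_1 \ge \theta^c_2$ to occupy positions $\dim N/2$ and $\dim N/2 + 1$ in the decreasing list $\theta_1 \ge \cdots \ge \theta_{\dim N}$; applying \eqref{eq.symmetric} with $j = \dim N/2$ then yields $\theta^c_1 + \theta^c_2 = 0$. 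Under alternative (1) the two centre exponents are distinct, so neither vanishes; combined with strictly positive exponents along $E^u$ and strictly negative ones along $E^s$, this gives non-uniform hyperbolicity at $\volume$-almost every point.

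For the second assertion the key observation is that in dimension two a conformal structure together with an area form canonically determines a Riemannian metric: within the given conformal class there is a unique representative whose associated area form is the prescribed one. As recalled in Subsection~\ref{s.lyapunov}, the restriction $\omega^c = \omega|_{E^c}$ is non-degenerate and thus defines an $f$-invariant area form on each $2$-dimensional centre leaf. Combining this with the invariant conformal structure provided by alternative~(3) produces a continuous $f$-invariant Riemannian metric on centre leaves, with continuity inherited from that of the conformal structure and of $\omega^c$.

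The main obstacle is to verify that $\omega^c$ is preserved also by the stable and unstable holonomies between centre leaves. For this I would exploit $f$-equivariance: writing $h^s$ for the stable holonomy from the centre leaf through $p$ to the one through $q\in\cF^s(p)$, and $h^s_n$ for the analogous holonomy between the centre leaves through $f^n p$ and $f^n q$, one has $f^n \circ h^s = h^s_n \circ f^n$. Because $f^n$ preserves $\omega^c$, the Jacobian $J^s$ of $h^s$ with respect to $\omega^c$ satisfies $J^s(x) = J^s_n(f^n x)$ for every $n$. Under centre-bunching, $su$-holonomies between nearby centre leaves are $C^1$ with continuously varying derivative by Pugh-Shub-Wilkinson; since $d(f^n p, f^n q) \to 0$ exponentially, $h^s_n$ converges to the identity and its Jacobian to $1$. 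Hence $J^s \equiv 1$, so $h^s$ preserves $\omega^c$, and the argument for $h^u$ is symmetric. Combining with the previous paragraph then completes the proof.
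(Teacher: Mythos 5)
Your proposal is correct and follows essentially the same route as the paper. The first part (symplectic symmetry forcing $\theta^c_1+\theta^c_2=0$) and the construction of the metric from the invariant conformal class plus the area form $\omega^c$ are identical to the paper's argument. The only variation is in establishing that $\omega^c$ is preserved by $su$-holonomy (Lemma~\ref{l.symplectic}): the paper substitutes the explicit limit formula $(Dh^s_{x,y})_p=\lim_n(\Dcf^{-n})_{f^n(q)}\circ\Pi_{f^n(p),f^n(q)}\circ(\Dcf^n)_p$ and tracks the determinant $\Delta_{f^n(p),f^n(q)}\to1$ of the horizontal projection, whereas you take $\omega^c$-Jacobians of the equivariance identity $f^n\circ h^s=h^s_n\circ f^n$ and use that $h^s_n$ has Jacobian tending to $1$. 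These are two phrasings of the same observation, both resting on $\Dcf$ preserving $\omega^c$ together with the $C^1$-continuity of holonomy and the exponential contraction $d(f^np,f^nq)\to0$; your version is a touch more economical since it avoids unwinding the limit formula explicitly.
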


\subsection{Rigidity - the torus case}\label{s.torus}

Initially, we deal with the case $S=\TT^2$.
Let $f_0:M\times\TT^2\to M \times \TT^2$ be an elliptic affine extension of an
Anosov diffeomorphism $g_0:M \to M$, as defined in \eqref{eq.affine_extension}.
We speak of an \emph{$L_0$-affine extension} instead when we want to specify
the choice of the elliptic element $L_0$ of $\SL(2,\ZZ)$.
In Section~\ref{s.accessibility} we observe that stable accessibility is dense among the diffeomorphisms of the form \eqref{eq.affine_extension}.

\begin{remark}\label{r.finite_order}
Every elliptic $L_0\in\SL(2,\ZZ)$ has finite order, which must be $1$, $2$, $3$, $4$ or $6$.
Indeed, let $e^{\pm i\theta}$ be the eigenvalues.
The characteristic equation $x^2 - \trace(L_0) x + 1 = 0$ gives that $\trace(L_0)=2\cos\theta$.
As the trace is an integer, it follows that $\cos\theta\in\{-1,-1/2,0,1/2,1\}$,
which implies the claim.
\end{remark}

We also assume that $f_0$ preserves a given measure $\mu$ in the Lebesgue class of $M \times\TT^2$.
That is the case, for example, if $f$ preserves a symplectic form $\omega$ on $M \times\TT^2$ and,
more specifically, when the Anosov diffeomorphism $g_0$ itself preserves a symplectic form $\omega_M$ on
$M$: then it suffices to take $\omega=\omega_M \times \omega_S$, where
$\omega_S$ is the standard area form on $\TT^2$.

\begin{mainthm}~\label{theorem_torus}
There exists a neighbourhood $\cU_T$ of $f_0$ in the space of $C^r$, $r \ge 2$
diffeomorphisms of $M\times\TT^2$ such that every accessible $\mu$-preserving
$f\in\cU_T$ satisfies at least one of the following conditions:
\begin{enumerate}
\item[(1)] the centre Lyapunov exponents of $f$ are distinct,
    and they are continuous at $f$ as functions of the dynamical system;
\item[(2)] the centre bundle $E^{c}$ contains an invariant continuous line field or an invariant pair of transverse continuous line fields;
\item[(3)] $f$ is topologically conjugate to an $L_0$-affine extension $(x,v) \mapsto (g_0(x), L_0v+w(x))$ of $g_0$.
\end{enumerate}
\end{mainthm}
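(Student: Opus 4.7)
The plan is to use Theorem~\ref{theorem_general} as the entry point and to upgrade its alternative~(3) into the explicit topological conjugacy required here. Applying that theorem to $f\in\cU_T$, alternatives (1) and (2) of Theorem~\ref{theorem_general} coincide with alternatives (1) and (2) of the present statement (since $S=\TT^2$), so it suffices to treat the remaining case, in which there is a continuous conformal structure $J$ on the centre leaves, invariant under $f$ and under the stable and unstable holonomies. Using the leaf conjugacy $H_f$ of Example~\ref{ex.skew_product}, I would normalise $f$ so that it acts as $f(x,v)=(g_0(x),\phi_x(v))$ for continuous diffeomorphisms $\phi_x:\TT^2\to\TT^2$ close to $v\mapsto L_0 v+w_0(x)$.

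Each fibre $\{x\}\times\TT^2$ equipped with $J_x$ is a complex torus, hence biholomorphic to some $\CC/\Lambda_x$. Accessibility, combined with the fact that the stable and unstable holonomies are biholomorphic for $J$, implies that all the $\Lambda_x$ determine the same point of moduli space; fix a common representative $\Lambda$. Because every conformal self-map of $\CC/\Lambda$ is affine, once continuous uniformizations $h_x:\CC/\Lambda\to(\{x\}\times\TT^2,J_x)$ are available, the conjugated map $H^{-1}\circ f\circ H$, with $H(x,v)=(x,h_x(v))$, takes the form $(x,v)\mapsto(g_0(x),L(x)v+w(x))$, where $L(x)$ lies in the finite group $\operatorname{Aut}(\Lambda)$ of linear automorphisms of $\Lambda$ and $w(x)\in\CC/\Lambda$.

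The technical heart of the argument, and what I expect to be the main obstacle, is to build the family $\{h_x\}$ continuously. My approach is to fix a base point $x_0\in M$, choose $h_{x_0}$ arbitrarily, and propagate it along $su$-paths using the biholomorphic holonomies. At each step the translation part of the transport can be reabsorbed into the choice of $h_x$, leaving a linear obstruction taking values in the discrete group $\operatorname{Aut}(\Lambda)$. For the unperturbed $f_0$ a short computation shows that the stable and unstable holonomies between centre leaves are translations in the standard conformal structure, so the monodromy around closed $su$-loops is trivial modulo translations. By continuous dependence of the invariant conformal structure on $f$ (for $f$ close to $f_0$) and continuity of holonomies, the monodromy for $f$ is close to that of $f_0$, and discreteness of $\operatorname{Aut}(\Lambda)$ forces it to be the identity. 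This yields a continuous, well-defined family $\{h_x\}_{x\in M}$, and hence the homeomorphism $H$.

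Finally, the function $x\mapsto L(x)$ takes values in the finite set $\operatorname{Aut}(\Lambda)$ and is continuous, hence constant on the connected manifold $M$; since $H^{-1}\circ f\circ H$ is close to $f_0$, whose linear part is $L_0$, this constant value must equal $L_0$. This exhibits $f$ as topologically conjugate to the $L_0$-affine extension $(x,v)\mapsto(g_0(x),L_0 v+w(x))$ of $g_0$, as required.
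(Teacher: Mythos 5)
Your overall strategy matches the paper's: start from alternative~(3) of Theorem~\ref{theorem_general}, normalise by the leaf conjugacy $H_f$, build a fibrewise uniformisation of the centre leaves by complex tori, and then conclude that the fibre maps are affine and the linear part is constant. Where you diverge---and where a real gap appears---is in the construction of the continuous family of uniformisations.

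Your proposed propagation of $h_{x_0}$ along $su$-paths is ill-defined. The biholomorphic holonomies of $f$ between centre leaves lift to affine maps $z\mapsto\alpha z+\beta$ of $\CC$, and your discreteness argument in $\operatorname{Aut}(\Lambda)$ only shows that the \emph{linear} parts $\alpha$ of the $su$-loop monodromy are trivial. It says nothing about the \emph{translation} parts $\beta$, and these are generically non-trivial: in fact this is precisely the content of Lemma~\ref{holonomytranslation} in the paper, which shows that the holonomies (with respect to the translation structure induced by the uniformisation) are non-trivial translations. Accessibility makes it worse rather than better---it forces the translation monodromy to be large. So two $su$-paths from $x_0$ to the same $y$ will generally give transports $h_y$ that differ by a non-trivial translation, and ``reabsorbing the translation part into the choice of $h_x$'' does not resolve this, because there is no globally consistent choice: the cocycle of translations around loops is a genuine obstruction that cannot be gauged away step by step. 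The paper sidesteps this entirely: it does \emph{not} produce a flat (holonomy-equivariant) trivialisation, but a continuous one, constructed by writing the invariant conformal structure as a continuous Beltrami differential $\mu^x$, solving the Beltrami equation with the canonical normalisation fixing $0,1,\infty$ (Theorem~\ref{t.mRmt}), and invoking Theorem~\ref{t.mRmt_continuity} for continuity in $x$. The constancy of the modulus $\tau(x)$ is then deduced from discreteness of the modular-group orbit, not from a monodromy argument. Only \emph{after} this non-flat uniformisation is built does the paper prove that the holonomies become translations with respect to it.

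There is a secondary imprecision at the end. You argue that because $H^{-1}\circ f\circ H$ is ``close to $f_0$'', the constant linear part $L$ must equal $L_0$. But $H$ incorporates the uniformisation, which is in general far from the identity, so $H^{-1}\circ f\circ H$ need not be close to $f_0$ at all, and a priori you only get that $L$ lies in $\operatorname{Aut}(\Lambda)$. The paper instead compares the actions on homology: the restriction of $f$ to each centre leaf is isotopic to $L_0$ (because it is $C^0$-close to the corresponding restriction of $f_0$), hence acts as $L_0$ on $H_1(\TT^2;\ZZ)$; on the other hand the conjugated map is isotopic to $L$. This gives only that $L$ and $L_0$ are \emph{conjugate} in $\SL(2,\ZZ)$, and the paper applies one further conjugation by $\Id\times P$ to replace $L$ by $L_0$. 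Your proposal would need this extra step as well.
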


In the symplectic case, the alternative (1) implies that $f$ non-uniformly hyperbolic.
Note that the $L_0$-elliptic extension in the alternative (3) is only claimed to be a homeomorphism,
as our methods can only prove that $w$ is continuous
(H\"older regularity of the solutions of the Beltrami equation, cf. \cite[Theorem~8]{AhB60},
can probably be used to prove that $w$ is H\"older).
It would be interesting to know whether this can be upgraded to differentiability.

Another interesting open question concerning alternative (3) of the theorem
is whether the conjugacy may be taken to be differentiable,
at least when $M$ is a surface. That would imply that the $L_0$-elliptic
extension is differentiable, of course.
The ideas in \cite[Section~7.3]{AVW15} suggest that progress is perhaps 
possible also on the way of the converse.

In the corollary that follows it is assumed that $g_0$ has some fixed point.
By a result of
Sondow~\cite{Son76} (see also Franks~\cite{Fra69},
Hirsch~\cite{Hir71} and Manning~\cite{Mann73,Mann74}),
that is automatic if $M$ is the quotient $G/F$ of a compact, connected Lie group $G$ by any finite subgroup $F$.
In particular, the assumption is automatically satisfied if $M=\TT^d$ for any $d\ge 2$. 

\begin{maincor}~\label{corollaryD}
In the setting of Theorem~\ref{theorem_torus}, assume that $g_0$ has a fixed
point and the order of $L_0$ is greater than $2$.
Then, up to reducing the neighbourhood $\cU_T$ if necessary, every accessible
$\mu$-preserving $f\in\cU_T$ satisfies that either the two centre Lyapunov
exponents are distinct, or $f$ is topologically conjugate to an $L_0$-affine
extension $(x,v) \mapsto (g_0(x), L_0v+w(x))$ of $g_0$.
\end{maincor}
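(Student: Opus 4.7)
The plan is to rule out alternative~(2) of Theorem~\ref{theorem_torus} under the corollary's hypotheses, so that the trichotomy there collapses to exactly the two alternatives claimed. I would exploit the fixed point $x_0\in M$ of $g_0$ to reduce the argument to local dynamics on a single centre fibre.

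First I would use Example~\ref{ex.skew_product} and the leaf conjugacy $H_f$ to identify the centre leaf $\Sigma:=\cF^c_{x_0}$ of $f$ with $\{x_0\}\times\TT^2$; since $x_0$ is $g_0$-fixed, the relation \eqref{eq.leaf_conjugacy} gives $f(\Sigma)=\Sigma$, and, read through $H_f|_{\{x_0\}\times\TT^2}$, the map $f|_\Sigma$ becomes a $C^1$-perturbation of the affine map $T_0\colon v\mapsto L_0v+w_0(x_0)$. Since $L_0$ is elliptic of order $n\in\{3,4,6\}$, the matrix $I-L_0$ is invertible, so $T_0$ has $|\det(I-L_0)|\ge 1$ non-degenerate fixed points on $\TT^2$. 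By the implicit function theorem, for $f$ in a small enough $\cU_T$ the perturbation $f|_\Sigma$ has the same number of fixed points $p$, and the restricted derivative $D(f|_\Sigma)_p$ on $E^c(p)\cong\RR^2$ is close to $L_0$. Continuity of the roots of the characteristic polynomial then forces the eigenvalues of $D(f|_\Sigma)_p$ to be close to $e^{\pm 2\pi i/n}$, hence non-real, so $D(f|_\Sigma)_p$ admits no real eigendirection.

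A case analysis on the invariant structure provided by alternative~(2) handles the bulk of the cases at such a $p$. If $E^c$ carries a single $f$-invariant continuous line field $\mathcal{L}$, or an invariant pair $\{\mathcal{L}_1,\mathcal{L}_2\}$ with each $\mathcal{L}_i$ individually $f$-invariant, then $\mathcal{L}(p)$ (respectively each $\mathcal{L}_i(p)$) would have to be a real eigenline of $D(f|_\Sigma)_p$, contradicting the previous paragraph. In the remaining subcase of an $f$-invariant pair $\{\mathcal{L}_1,\mathcal{L}_2\}$ that $f$ globally swaps, $D(f|_\Sigma)_p$ exchanges $\mathcal{L}_1(p)$ and $\mathcal{L}_2(p)$, so in a basis adapted to the pair it is anti-diagonal; in particular $\trace D(f|_\Sigma)_p = 0$. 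Since $D(f|_\Sigma)_p$ is close to $L_0$ and $\trace L_0 = 2\cos(2\pi/n)$, this fails for $n=3$ ($\trace L_0=-1$) and $n=6$ ($\trace L_0=1$), killing those subcases too.

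The only configuration surviving is $n=4$ with a swapped pair, where $\trace L_0=0$ is compatible with anti-diagonality. Here I would upgrade the pair to an $f$-invariant continuous conformal structure on $E^c$ by declaring $\mathcal{L}_1\perp\mathcal{L}_2$; orthogonality is symmetric in the two lines, so the swap does not destroy the structure. The same Invariance Principle arguments that drive Theorem~\ref{theorem_general} then propagate $f$-invariance of this conformal structure to invariance under the stable and unstable holonomies, putting $f$ in alternative~(3) of Theorem~\ref{theorem_torus} and producing the topological conjugacy to an $L_0$-affine extension required by the corollary. I expect the delicate point to be exactly this last step: one has to verify that the conformal structure manufactured from a merely-swapped pair satisfies the precise hypotheses under which the rigidity argument of Section~\ref{s.theorem_torus} upgrades an invariant conformal structure to an actual conjugacy, and in particular that $su$-invariance genuinely transfers through the pair-to-conformal-structure construction.
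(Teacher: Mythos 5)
Your argument is close in spirit to the paper's own proof of Lemma~6.1: both hinge on a fixed point $p$ of $f$ lying over the fixed leaf $\cF^c_{x_0}$, and both boil down to the same linear-algebra observation, since for an invertible $2\times 2$ matrix $A$, the projectivisation $\PP A$ has a periodic orbit of period at most $2$ on $\PP^1$ if and only if $A$ has a real eigenline or $\trace A=0$. So your trace/eigenvalue case analysis is just a rephrasing of the paper's ``no periodic points of period less than $3$'' argument for $\PP((\Dcf)_p)$. For $L_0$ of order $3$ or $6$ this works: the eigenvalues of $L_0$ are primitive roots of unity off the real axis and $\trace L_0=\mp 1\neq 0$, so nearby matrices have neither a real eigenline nor trace $0$, which excludes alternative~(2). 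That part of your write-up is correct and faithful to the paper.

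The genuine gap is exactly where you flag it, in the order-$4$ case, and your proposed patch does not close it. Declaring $\mathcal{L}_1\perp\mathcal{L}_2$ does \emph{not} single out a conformal structure on a two-dimensional space: the set of inner-product classes making two fixed transverse lines orthogonal is a one-parameter family (parametrised by the ratio of the ``lengths'' along the two directions), and there is no canonical choice of that modulus from the unordered pair of lines alone. Hence the object you wish to feed into the rigidity argument of Section~6 is not well-defined, and the claimed transfer of $su$-invariance cannot even be stated. It is worth being aware that this is not an artefact of your route: when $L_0$ has order $4$ one has $L_0^2=-\Id$, so $\PP(L_0)$ has order $2$ on $\PP^1$ and \emph{every} point of $\PP^1$ is $2$-periodic; consequently the paper's Lemma~6.1, which asserts that $\PP(L_0)$ has no periodic points of period less than $3$, does not actually cover the order-$4$ case either (the unperturbed $f_0$ itself then carries a holonomy-invariant swapped pair $\{\ell,L_0\ell\}$). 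So for $L_0$ of order $3$ or $6$ you have reproduced the intended proof; for order $4$ a new idea is needed, and ``orthogonalise the pair'' is not it.
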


In other words, alternative (2) of Theorem~\ref{theorem_torus} may be excluded
when the order of $L_0$ is greater than $2$. Example~\ref{ex.linefield} shows
that this need no longer be true when $L_0=\pm\Id$.

\subsection{Rigidity - the sphere case}\label{s.sphere}

Now let us consider the case $S=\SS^2$.
Take $f_0:M \times \SS^2 \to M \times \SS^2$ to be a Möbius extension of
a transitive Anosov diffeomorphism $g_0:M\to M$, as defined in \eqref{eq.Mobius_extension}.
It is assumed that $f_0$ preserves a given measure $\mu$ in the Lebesgue
class of $M \times\SS^2$.

\begin{mainthm}~\label{theorem_sphere}
There exists a neighbourhood $\cU_S$ of $f_0$ in the space of $C^r$, $r \ge 2$ diffeomorphisms of $M\times\SS^2$ such that every accessible $\mu$-preserving
$f\in\cU_S$ satisfies one of the following conditions: 
\begin{enumerate}
    \item[(1)] the centre Lyapunov exponents of $f$ are distinct,
    and they are continuous at $f$ as functions of the dynamical system;
    \item[(2)] $f$ is topologically conjugate to some Möbius extension of $g_0$.
\end{enumerate}
\end{mainthm}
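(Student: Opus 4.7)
The plan is to deduce Theorem~\ref{theorem_sphere} as a rigidity enhancement of Theorem~\ref{theorem_general}. By Example~\ref{ex.skew_product}, any diffeomorphism $f$ in a sufficiently small $C^r$-neighbourhood $\cU_S$ of $f_0$ is centre-bunched, dynamically coherent and fibred, so $f\in\cF^r_\mu(M\times\SS^2,\SS^2)$. Assuming in addition that $f$ is accessible and $\mu$-preserving, Theorem~\ref{theorem_general} provides three alternatives. Alternative (2) requires $S=\TT^2$, and so is incompatible with $S=\SS^2$, leaving us only with (1) and (3). Alternative (1) is exactly conclusion (1) of the theorem, so the core of the argument is to show that alternative (3) of Theorem~\ref{theorem_general} implies that $f$ is topologically conjugate to a Möbius extension of $g_0$.

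So suppose that $f$ preserves a continuous conformal structure $\{\sigma_x\}$ on the centre leaves $\{\cF^c_x\}$, invariant also under the stable and unstable holonomies. Each leaf is topologically $\SS^2$, and by the uniformization theorem for Riemann surfaces, each $(\cF^c_x,\sigma_x)$ is conformally equivalent to the standard sphere. To obtain these equivalences uniquely and continuously, I fix three distinct points $p_1,p_2,p_3\in\SS^2$ and, using any leaf conjugacy $H_{f}$ of Example~\ref{ex.skew_product}, define continuous sections $q_i(x)=H_{f}(x,p_i)\in\cF^c_x$. The three-point normalisation $\phi_x(q_i(x))=p_i$ then selects a unique conformal homeomorphism $\phi_x\colon\cF^c_x\to\SS^2$. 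Continuity of $x\mapsto\phi_x$ (in the compact-open topology) follows from the continuity of the conformal structure together with the classical stability of the uniformising map under continuous variation of the complex structure on $\SS^2$.

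Next I handle the base. The fibration $\pi_f$ associated with $\cF^c$ induces on the leaf space (identified with $M$ via $H_f$) a homeomorphism $h$ that is $C^0$-close to $g_0$. Since $g_0$ is a transitive Anosov diffeomorphism, structural stability yields a homeomorphism $\rho\colon M\to M$ with $h\circ\rho=\rho\circ g_0$. Setting $\Phi(x,v)=\phi_{\rho(x)}^{-1}(v)\in\cF^c_{\rho(x)}\subset M\times\SS^2$, I claim that $\Phi$ is a homeomorphism of $M\times\SS^2$ conjugating $f$ to a Möbius extension of $g_0$. Indeed, for each $x$ the map $f\colon\cF^c_{\rho(x)}\to\cF^c_{\rho(g_0(x))}$ is conformal (by invariance of $\sigma$), so the composition
\begin{equation*}
\zeta_x \;=\; \phi_{\rho(g_0(x))}\circ f|_{\cF^c_{\rho(x)}}\circ\phi_{\rho(x)}^{-1}\colon\SS^2\to\SS^2
\end{equation*}
is a conformal bijection of $\SS^2$, hence a Möbius transformation. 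A direct computation then gives $\Phi^{-1}\circ f\circ\Phi(x,v)=(g_0(x),\zeta_x(v))$, which is precisely a Möbius extension of $g_0$. Continuity of $x\mapsto\zeta_x$ is inherited from the continuity of $\phi_x$ and of $f$.

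The step I expect to be the main technical obstacle is the continuous dependence of the uniformising maps $\phi_x$ on $x$. The conformal structure comes out of Theorem~\ref{theorem_general} only as a continuous field of equivalence classes of inner products on the centre tangent bundle, and what is needed is continuous dependence of the resulting holomorphic atlas, and hence of the normalised uniformisations. The cleanest way is to appeal to the parametric version of the measurable Riemann mapping theorem (Ahlfors--Bers), applied to the Beltrami coefficients of $\sigma_x$ relative to a fixed background conformal structure pulled through $H_f$; continuity in the parameter then gives continuity of $\phi_x$. Once this is in place the rest of the argument is largely formal, and yields a topological conjugacy to a Möbius extension of $g_0$, completing the proof.
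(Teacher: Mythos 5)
Your approach is essentially the paper's: exclude alternative~(2) of Theorem~\ref{theorem_general} because $\SS^2$ carries no continuous line field, then under alternative~(3) build a simultaneous uniformisation of the centre leaves via the (parametric) measurable Riemann mapping theorem (Theorem~\ref{t.mRmt} and Theorem~\ref{t.mRmt_continuity}), normalised at three continuously varying marked points obtained by pushing fixed points of $\SS^2$ through the leaf conjugacy $H_f$, and then observe that $f$ acts on the uniformised fibres by Möbius maps. This is exactly what Proposition~\ref{p.uniformisation2} and the two displayed lines that follow it accomplish (with $p_1,p_2,p_3 = 0,1,\infty$).

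The one step that, as stated, does not go through is your production of the base conjugacy $\rho$. You identify the leaf space with $M$ via $H_f$ and obtain a \emph{homeomorphism} $h$ that is $C^0$-close to $g_0$, then invoke ``structural stability'' to conjugate $h$ to $g_0$. Structural stability of Anosov diffeomorphisms requires the perturbation to be a diffeomorphism that is $C^1$-close; for a mere homeomorphism that is $C^0$-close, topological stability (Walters) gives only a semi-conjugacy, which need not be injective. No appeal to either result is needed, though: the invariance property \eqref{eq.leaf_conjugacy} of the leaf conjugacy already says $f\bigl(H_f(\{x\}\times\SS^2)\bigr)=H_f\bigl(\{g_0(x)\}\times\SS^2\bigr)$, so with the labelling $\cF^c_x:=H_f(\{x\}\times\SS^2)$ the induced base map \emph{is} $g_0$ and you may take $\rho=\id$; equivalently, in the paper's labelling it is $h_f$ itself, descended from $H_f$, that conjugates the induced base map to $g_0$ (see \eqref{eq.conjugahyp}). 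With that substitution your computation of $\zeta_x$ is correct and the proof closes as you describe.
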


In the symplectic case, the alternative (1) implies that $f$ is non-uniformly hyperbolic. 
Note that requiring $f_0$ to be symplectic imposes strong restrictions on the Möbius transformations $\zeta_x$:

\begin{example}\label{ex.sphere}
Let $g_0:M\to M$ be an Anosov diffeomorphism preserving some symplectic form $\omega_M$ on $M$.
Let $\omega=\omega_M\times\omega_S$, where $\omega_S$ is the standard area
form on the sphere, induced by the Euclidean volume form in $\RR^3$ through
$$
\omega_{S,p}(u,v) = (dx_1 \wedge dx_2 \wedge dx_3)\big(u,v,p\big)
$$
for any $p \in \SS^2$ and
$u, v \in T_p\SS^2$. It is clear that a map $f_0$ as in \eqref{eq.Mobius_extension} preserves $\omega$
if and only if every $\zeta_x$ preserves $\omega_S$. We claim that the latter happens if and only
if  $\zeta_x$ is a rotation, that is, the restriction of a rigid motion of $\RR^3$
that preserves the unit sphere. Since the `if' part is obvious, we only have to check that if
$\zeta_x$ preserves $\omega_S$ then it is a rotation. That can be done as follows.

Let $P:\SS^2 \to \CC \cup \{\infty\}$ be the stereographic projection.
As observed by Arnold, Rogness~\cite{ArR08} and Siliciano~\cite{Sil12},
for every Möbius transformation $\zeta:\SS^2\to\SS^2$ there exists a
unique rigid motion $T:\RR^3 \to \RR^3$ mapping $\SS^2$ to a sphere
$T(\SS^2)$ whose ``north pole'' lies in the upper half-space
-- this ensures that the steorographic projection
$Q:T(\SS^2) \to \CC \cup \{\infty\}$ is well-defined -- such that 
$$
\zeta = P^{-1} \circ Q \circ T:\SS^2 \to \SS^2.
$$
It follows that $\zeta$ is area-preserving if and only if the map
$P^{-1} \circ Q:T(\SS^2) \to \SS^2$ is area-preserving, meaning
that its Jacobian $J(P^{-1} \circ Q)$ with respect to the standard
area-forms on the two spheres is constant equal to $1$.

The Jacobian of $P^{-1} : \CC \cup \{\infty\} \to \SS^2$ with respect to
the standard area forms on the plane and the sphere is
$$
JP^{-1}(z) = \frac{4}{(1+|z|^2)^2}.
$$
Thus the level sets of $JP^{-1}$ are the circles about the origin,
and the level sets of $JP$ are the parallels of $\SS^2$, that is,
the intersections of the sphere with the horizontal planes.
Analogously, the level sets of $JQ^{-1}$ are the circles about
the point $x_0+iy_0\in\CC$, where $(x_0,y_0,z_0)=T(0,0,0)$ is the centre
of $T(\SS^2)$ (keep in mind that $z_0>-1$).
For the Jacobian of $P^{-1} \circ Q$ to be constant, the level sets must
coincide, and so we must have $x_0=y_0=0$. Observing that 
$$
JP^{-1}(0) = 4 \text{ and } JQ^{-1}(0) = \frac{4}{(1+z_0)^2},
$$
we see that we must also have $z_0=0$.
Then $T(\SS^2)=\SS^2$ and so $Q=P$, showing that $\zeta = T$ is a rotation.
\end{example}

\section{Preliminaries}\label{s.preliminaries}

In this section we collect a few tools from the literature that will be used in our arguments.

\subsection{Conformal barycentres}\label{s.conformal_barycentre}

Recall that a \emph{conformal structure} on a vector space $V$ is an inner product up to multiplication
by a positive scalar or, more formally, a projective class of inner products on $V$.
\emph{We consider only real vector spaces and linear maps.}
We call \emph{canonical} the conformal structure on $\CC=\RR^2$ associated to the Euclidean inner product.

Let $\HH\subset\CC$ be the Poincaré upper half-plane.
A construction of Douady, Earle~\cite[Section~2]{DE86} associates to each probability measure
$m$ on the boundary $\partial \HH$ with no atom of mass $1/2$ or greater a \emph{conformal barycentre}
$B(m)\in\HH$ which is invariant with respect to the conformal automorphisms of the half-plane:
\begin{equation}\label{eq.equivariance}
B(\phi_* m) = \phi(B(m)) \text{ for every } \phi \in \PSL(2,\RR).
\end{equation}
We associate to $m$ the unique conformal structure on $\CC$ which is preserved by the stabiliser
$\Stab(B(m))$ of the conformal barycentre, that is, which is invariant under every linear isomorphism
$$
A = \left(\begin{array}{cc} a & b \\ c & d \end{array}\right):\CC\to\CC
$$
such that $\phi_A(B(m))=B(m)$ for the automorphism $\phi_A \in \PSL(2,\RR)$ defined by
$$ 
\phi_A(z) = \frac{az+b}{cz+d}.
$$

\begin{remark}
The stabiliser of the imaginary unit $i$ is the subgroup of automorphisms $\phi\in\PSL(2,\RR)$ of the form
$$
\phi(z) = \frac{az+b}{cz+d}\quad \text{with $a = d$ and $b=-c$}.
$$
The corresponding linear isomorphisms $A:\CC\to\CC$ preserve the standard conformal structure
on $\CC$ and, moreover, that is the only conformal structure on the plane invariant under every 
linear isomorphism $A$ such that $\phi_A\in\Stab(i)$.
The stabiliser of any other $w\in\HH$ coincides with $\phi_w\Stab(i)\phi_w^{-1}$,
where $\phi_w\in\PSL(2,\RR)$,
$$
\phi_w(z) = \frac{a_wz+b_w}{c_wz+d_w},
$$
is such that $\phi_w(i)=w$.
Hence $\Stab(w)$ also preserves a unique conformal structure on $\CC$, namely,
the push-forward of the standard conformal structure under the linear isomorphism
$$
\left(\begin{array}{cc}a_w & b_w \\ c_w & d_w \end{array}\right):\CC \to \CC.
$$
\end{remark}

\begin{remark}\label{r.continuity}
It follows from the construction in~\cite[Section~2]{DE86} that the conformal barycentre
varies continuously with the probability measure relative to the weak$^*$ topology.
Indeed, for each $m$, consider the vector field $\xi_m:\HH\to\RR$ defined by
$$
\xi_m(w) = \frac{1}{\phi_w'(w)} \int_{\partial\HH} \phi_w(z) \, dm(z)
$$
where $\phi_w$ is the conformal automorphism of the half-plane such that $\phi_w(w)=i$.
It is clear that $\xi_m$ is real-analytic and varies continuously with $m$ relative to the weak$^*$ topology.
By construction, the conformal barycentre $B(m)$ is the only zero of $\xi_m$,
and it has index $1$, meaning that the winding number of $\xi_m$ along any small simple
closed curve $c$ around $B(m)$ is equal to $1$. Fix an arbitrarily small $c$.
The winding number of $\xi_{m'}$ along $c$ remains $1$ for any $m'$ close to $m$,
and that implies that $\xi_{m'}$ has some zero in the inside of $c$.
By uniqueness, this means that $B(m')$ is in the inside of $c$, for any $m'$ close to $m$,
which proves the claim.
\end{remark}

\subsection{Measurable Riemann mapping theorem}

We quote a few useful facts from conformal mapping theory.
See Ahlfors, Bers~\cite{AhB60} and Ahlfors~\cite{Ah66,Ah73} for more details and references.

A map $f$ on a Riemannian manifold is said to be \emph{conformal} if the 
derivative at (almost) every point preserves angles, relative to the given Riemannian metric.
For any domain $U$ of the plane, the Riemannian metric may always be written
as $ds = \lambda |dz + \mu d\bar{z}|$ where $\lambda =\lambda(z)$ is a positive number and
$\mu=\mu(z)$ is a complex number with $|\mu| < 1$.
Then conformality with respect to this metric (called \emph{$\mu$-conformality})
is expressed by the \emph{Beltrami equation}
\begin{equation}\label{eq.Beltrami1}
\partial_{\bar{z}} f = \mu \, \partial_z f,
\end{equation}
where
$$
\partial_z f = \frac 12 (\partial_x f - i \partial_y f)
\text{ and } 
\partial_{\bar{z}} f = \frac 12 (\partial_x f + i \partial_y f)
$$

The measurable Riemann mapping theorem asserts that if $\mu$ is measurable
and $\sup|\mu| < 1$ then a $\mu$-conformal map $f$ does exist.
More precisely, we shall use the following existence and uniqueness result,
which is contained in \cite[Theorem~6]{AhB60}:

\begin{theorem}\label{t.mRmt}
Let $\mu:\CC \to \CC$ be a measurable function such that $\sup|\mu|<1$.
Then there exists a unique homeomorphism $f:\CC \to \CC$ which is $\mu$-conformal and
fixes the points $0$, $1$ and $\infty$.
\end{theorem}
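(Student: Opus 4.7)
The plan is to construct $f$ via the Cauchy and Beurling--Ahlfors transforms together with a Neumann series, and to obtain uniqueness from a Weyl-lemma argument. First I would look for $f$ in the form
$$
f(z) = z + Ph(z), \qquad Ph(z) = -\frac{1}{\pi}\int_{\CC}\frac{h(\zeta)}{\zeta - z}\, dA(\zeta),
$$
with $h$ an unknown function in $L^p(\CC)$ for a suitable $p$. The distributional identities $\partial_{\bar z}Ph = h$ and $\partial_z Ph = Th$, where
$$
Th(z) = -\frac{1}{\pi}\,\mathrm{p.v.}\!\int_{\CC}\frac{h(\zeta)}{(\zeta-z)^2}\, dA(\zeta)
$$
is the Beurling transform, reduce the Beltrami equation $\partial_{\bar z}f = \mu\,\partial_z f$ to the linear fixed-point equation
$$
h - \mu T h = \mu.
$$

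The second step is to solve this equation on $L^p$. The crucial input is that $T$ is a Calder\'on--Zygmund singular integral operator with $\|T\|_{L^2 \to L^2} = 1$, and by Riesz--Thorin interpolation $\|T\|_{L^p \to L^p}\to 1$ as $p\to 2$. Since $k := \sup|\mu| < 1$ by hypothesis, I can fix $p > 2$ with $k\,\|T\|_{L^p} < 1$; then $I - \mu T$ is invertible on $L^p(\CC)$ and
$$
h = \sum_{n=0}^{\infty}(\mu T)^n\mu
$$
is the unique $L^p$ solution. The genuinely hard part of the entire argument is precisely this $L^p$-boundedness of $T$ for exponents slightly above $2$; once that is granted, the rest is bookkeeping.

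Third, I would check that $f = z + Ph$ is a homeomorphism of $\CC$. Since $h \in L^p$ with $p > 2$, the Cauchy transform $Ph$ is H\"older continuous by Sobolev embedding, so $f$ is continuous, and standard decay of $Ph$ at infinity gives $f(z)/z \to 1$, allowing a continuous extension to $\SS^2$ fixing $\infty$. The function $f$ lies in $W^{1,p}_{\mathrm{loc}}$ and satisfies a Beltrami equation with $\|\mu\|_\infty < 1$, so it is quasiregular; Stoilow factorisation then expresses $f$ locally as the composition of a holomorphic map with a homeomorphism, and the pointwise identity $|\partial_z f|^2 - |\partial_{\bar z}f|^2 = (1-|\mu|^2)|\partial_z f|^2$ shows the Jacobian is non-negative and vanishes only on a null set, so no branch points occur. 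Hence $f$ is a global homeomorphism; post-composing with an affine map normalises it to fix $0$, $1$ and $\infty$.

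For uniqueness, suppose $f_1$ and $f_2$ are two such normalised solutions. A routine chain-rule computation for Beltrami coefficients shows that the complex dilatation of $g := f_1 \circ f_2^{-1}$ vanishes almost everywhere, so by Weyl's lemma $g$ is holomorphic on $\CC$. A holomorphic homeomorphism of $\CC$ extending to $\SS^2$ is a M\"obius transformation, and the normalisation at $0$, $1$, $\infty$ forces $g = \id$. The principal obstacle throughout is the sharp $L^p$ estimate for the Beurling transform near $p = 2$; everything else reduces to a contraction mapping argument and classical facts about quasiconformal maps.
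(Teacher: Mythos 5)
The paper does not give a proof of this statement at all: it is the measurable Riemann mapping theorem, quoted verbatim from Ahlfors--Bers (the text immediately above the statement says it ``is contained in \cite[Theorem~6]{AhB60}''). So there is no in-paper proof to compare against. Your proposal is essentially the original Ahlfors--Bers/Bojarski argument, which is exactly the method in the cited source, so the approach is appropriate.

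Two points in your sketch are not quite right and deserve a comment. First, the Cauchy transform $Ph(z)=-\frac 1\pi\int h(\zeta)/(\zeta-z)\,dA(\zeta)$ as you wrote it does not converge for general $h\in L^p(\CC)$ with $p>2$; at infinity the integrand behaves like $|h(\zeta)|/|\zeta|$, which fails to be integrable unless $h$ has compact support or additional decay. The hypothesis $\sup|\mu|<1$ alone does not put $\mu$ in $L^p$, so the Neumann series $\sum(\mu T)^n\mu$ may not even make sense. The standard fix is to first prove the theorem for compactly supported $\mu$ (where everything you wrote is fine, with $h$ then automatically compactly supported since $h=\mu(1+Th)$) and then obtain the general case by a decomposition of $\mu$ near $0$ and near $\infty$ together with a conjugation by $z\mapsto 1/z$; your sketch omits this reduction. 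Second, the injectivity step is not justified by the Jacobian identity. Positivity of the Jacobian a.e.\ is perfectly compatible with branching (already for the holomorphic factor in the Stoilow decomposition), so ``the Jacobian vanishes only on a null set, hence no branch points'' is a non sequitur. The correct argument in the compactly supported case is that $f(z)=z+O(1/z)$ at infinity, so $f$ extends to a quasiregular self-map of $\SS^2$ of topological degree $1$, and a degree-one quasiregular map of the sphere is a homeomorphism. With these two repairs the rest of the plan -- the Calder\'on--Zygmund/Riesz--Thorin estimate $\|T\|_{L^p}\to 1$ as $p\to 2$, H\"older continuity from $h\in L^p$ with $p>2$, and uniqueness via Weyl's lemma and the three-point normalisation -- is the standard and correct route.
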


We shall also need the fact that the homeomorphism $f$ depends continuously
on $\mu$ in the sense of the following result, which is contained in 
\cite[Theorem~8]{AhB60}:

\begin{theorem}\label{t.mRmt_continuity}
Let $k<1$ be fixed. Then for any compact set $K\subset\CC$ there exists $C(K)>0$ such that 
$$
\sup_{z\in K} |f_1(z)-f_2(z)| \le C(K) \sup |\mu_1 - \mu_2|
$$
for any measurable functions
$\mu_1, \mu_2:\CC \to \CC$ with $\sup|\mu_i|<k$ for $i=1,2$,
where $f_i:\CC \to \CC$ denotes the $\mu_i$-conformal homemorphism that
fixes the points $0$, $1$, and $\infty$ (given by Theorem~\ref{t.mRmt}).
\end{theorem}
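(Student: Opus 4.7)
The plan is to realize the normalized $\mu$-conformal homeomorphism $f$ via the classical integral-equation representation coming from the Cauchy and Beurling transforms, and then read off the continuous dependence on $\mu$ from the continuity of the relevant resolvent. Recall the Cauchy transform $C\phi(z) = -\frac{1}{\pi}\int_{\CC}\frac{\phi(\zeta)}{\zeta-z}\,dA(\zeta)$ and the Beurling transform $S\phi(z) = -\frac{1}{\pi}\,\mathrm{p.v.}\!\int_{\CC}\frac{\phi(\zeta)}{(\zeta-z)^2}\,dA(\zeta)$, which satisfy the distributional identities $\partial_{\bar z}C\phi=\phi$ and $\partial_z C\phi=S\phi$. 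First I would look for the solution in the form $f(z)=z+C\phi(z)$ with $\phi\in L^p(\CC)$ for suitable $p>2$. Then the Beltrami equation \eqref{eq.Beltrami1} becomes the fixed point $\phi = \mu+\mu S\phi$, equivalently $\phi = (I-\mu S)^{-1}\mu$.

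Second, I would invoke the deep fact that $S$ is an isometry on $L^2$ and that $\|S\|_{L^p\to L^p}$ depends continuously on $p$ with $\|S\|_{L^2\to L^2}=1$. Hence, given the uniform bound $k<1$, one can fix $p=p(k)>2$ such that $k\,\|S\|_{L^p}<1$. For every measurable $\mu$ with $\sup|\mu|\le k$, multiplication by $\mu$ has operator norm at most $k$ on $L^p$, so $I-\mu S$ is invertible on $L^p(\CC)$ with
\[
\|(I-\mu S)^{-1}\|_{L^p\to L^p}\le \frac{1}{1-k\|S\|_{L^p}},
\]
a bound that is uniform over all admissible $\mu$. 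A resolvent identity then gives
\[
\phi_1-\phi_2 = (I-\mu_1 S)^{-1}\bigl((\mu_1-\mu_2) + (\mu_1-\mu_2)\,S\phi_2\bigr),
\]
from which $\|\phi_1-\phi_2\|_{L^p}\lesssim_{k,p}\|\mu_1-\mu_2\|_{L^p}$.

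Third, I would pass from $L^p$-closeness of $\phi_i$ to local uniform closeness of $f_i$. Since $p>2$, the Cauchy transform maps compactly supported $L^p$ functions into Hölder-continuous functions, and on any compact $K\subset\CC$ one has the pointwise bound $|C\phi(z)|\le c(K,p)\,\|\phi\|_{L^p}$. Combining this with the previous step and converting $L^\infty$ bounds on $\mu_1-\mu_2$ into $L^p$ bounds over a large enough disk $D\supset K$ via Hölder's inequality, we obtain $\sup_{z\in K}|f_1(z)-f_2(z)|\le C(K)\sup|\mu_1-\mu_2|$, which is exactly the desired inequality (after undoing the normalization at $0,1,\infty$).

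The main obstacle is not any individual step but the normalization and the fact that $\mu$ is not assumed compactly supported, which forces one to work with $L^p$ data on the whole plane. I would handle this by a Stoilow-type factorization: write $\mu = \mu'+\mu''$ where $\mu'$ is compactly supported and $\mu''$ is supported outside a large disk, decompose $f = F\circ G$ with $G$ absorbing $\mu'$ (treatable by the scheme above) and $F$ a normalized $\nu$-conformal homeomorphism for a Beltrami coefficient supported off a neighbourhood of $K$ (hence holomorphic on $K$ and easy to control by Montel-type arguments), and then track how the three normalization conditions $0,1,\infty\mapsto 0,1,\infty$ distribute between $F$ and $G$. This bookkeeping, together with verifying that the operator-norm estimates for $S$ on $L^p$ near $p=2$ are applicable uniformly, is where most of the technical work sits; conceptually, however, everything is driven by the single analytic input that $\|S\|_{L^p}\to 1$ as $p\to 2$.
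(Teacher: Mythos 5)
This theorem is not proved in the paper: the authors explicitly present it as a quotation, noting it is ``contained in \cite[Theorem~8]{AhB60}.'' So there is no proof of the paper's own to compare against; the appropriate comparison is with the Ahlfors--Bers argument being cited. Your proposal essentially reconstructs that argument: the ansatz $f = z + C\phi$, the fixed-point equation $\phi = (I-\mu S)^{-1}\mu$, the use of $\|S\|_{L^p\to L^p}\to 1$ as $p\to 2^+$ to get uniform invertibility for $\sup|\mu|\le k<1$, the resolvent identity for $\phi_1-\phi_2$, and the $L^p\to C^{1-2/p}$ mapping property of the Cauchy transform on compactly supported data. This is the same chain of ideas that underlies \cite[Theorems 5--8]{AhB60} and the modern account in Astala--Iwaniec--Martin, so the route is correct and not a genuinely different one.

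Two cautions about the places you flag as ``bookkeeping.'' First, the normalization: $f=z+C\phi$ is hydrodynamically normalized at $\infty$ ($f(z)=z+O(1)$), not normalized to fix $0,1,\infty$; passing between the two requires post-composing with the affine map $w\mapsto (w-f(0))/(f(1)-f(0))$ and a \emph{uniform} lower bound on $|f(1)-f(0)|$ over all $\mu$ with $\sup|\mu|\le k$. That lower bound is standard (it follows from the compactness/quasi-symmetry estimates for $k$-quasiconformal maps) but it must be stated; without it the Lipschitz constant $C(K)$ is not actually uniform in $\mu_1,\mu_2$. Second, the non-compact-support issue is not merely an inconvenience: $\mu_1-\mu_2\in L^\infty(\CC)$ does \emph{not} lie in $L^p(\CC)$, so the chain $\|\phi_1-\phi_2\|_{L^p}\lesssim\|\mu_1-\mu_2\|_{L^p}\lesssim\sup|\mu_1-\mu_2|$ fails outright without the Stoilow factorization $f=F\circ G$ you describe, and one must then prove continuity of $F$ and $G$ in their respective coefficients \emph{and} stability of the decomposition itself as $(\mu_1,\mu_2)$ vary. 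In Ahlfors--Bers this is handled (Theorem~7 reduces to compact support, Theorem~8 gives the dependence estimate), and your sketch points in the right direction, but as written it is an outline rather than a complete argument. For the purposes of this paper, which only needs the statement as a black box, your reconstruction is a faithful account of why the cited theorem is true.
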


\subsection{Invariance Principle}\label{s.principle}

Next we recall a few useful ideas from Ledrappier~\cite{Led86}, Bonatti, Gomez-Mont, Viana~\cite{BGV03},
Avila, Viana~\cite{Extremal} and Avila, Santamaria, Viana~\cite{ASV13}.
Additional related information can be found in Viana~\cite{LLE}.

\subsubsection{Cocycles and exponents}\label{ss.exponents}

Let $f:M \to M$ be a partially hyperbolic diffeomorphism on a compact manifold
and $\pi:\cV \to M$ be a continuous finite-dimensional vector bundle.
A \emph{linear cocycle} over $f$ is a continuous map $F:\cV \to \cV$ such that
$\pi \circ F = f \circ \pi$ and $F$ acts on the fibres by
linear isomorphisms $F_x: \cV_x \to \cV_{f(x)}$.
A theorem of Furstenberg, Kesten~\cite{FK60} asserts that the
\emph{extremal Lyapunov exponents} 
$$
\lambda_+(F,x) = \lim_n \frac 1n \log \|F_x^n\| 
\text{ and } 
\lambda_-(F,x) = \lim_n - \frac 1n \log \|(F_x^n)^{-1}\| 
$$
exist at $\mu$-almost every point $x \in M$, relative to any $f$-invariant probability measure $\mu$.
Note that $\lambda_-(F,x) \le \lambda_+(F,x)$ whenever they are defined.
If the system $(f,\mu)$ is ergodic then the functions $x \mapsto \lambda_\pm(F,x)$
are constant on a full $\mu$-measure set.

The \emph{projectivisation} of $\pi:\cV\to M$ is the continuous fibre bundle $\pi:\PP\cV \to M$
whose fibres are the projective quotients of the fibres of $\cV$.
Note that the fibres are smooth manifolds modelled after 
some projective space $\PP\RR^k$.
The \emph{projective cocycle} associated to $F:\cV \to \cV$ is the smooth cocycle
$\PP F : \PP\cV \to \PP \cV$ whose action $\PP F_x : \PP\cV_x \to \PP\cV_{f(x)}$ on the fibres
is given by the projectivisation of $F_x$.
The \emph{extremal Lyapunov exponents} of $\PP F$ are the numbers
$$
\lambda_+(\PP F,x,\xi) = \lim_n \frac 1n \log \|DF_x^n(\xi)\| 
\text{ and } 
\lambda_-(F,x,\xi) = \lim_n - \frac 1n \log \|DF_x^n(\xi)^{-1}\|.
$$
They are defined at $m$-almost every point $(x,\xi) \in \PP\cV$, for any $\PP F$-invariant
measure $m$. Moreover,
$$
\lambda_+(\PP F, x, \xi) \le \lambda_+(F,x) - \lambda_-(F,x)
\text{ and } 
\lambda_-(\PP F, x, \xi) \ge \lambda_-(F,x) - \lambda_+(F,x)
$$
whenever they are defined.

\subsubsection{Invariant holonomies}\label{ss.holonomies}

We call a \emph{stable holonomy} for $\PP F$ a family of homeomorphisms $H^s_{x,y}:\PP\cV_x \to \PP\cV_y$
defined for all $x$ and $y$ in the same stable leaf of $f$ and satisfying
\begin{itemize}
\item[(a)] $H^s_{y,z} \circ H^s_{x,y} = H^s_{x,z}$ and $H^s_{x,x} = \id$;
\item[(b)] $\PP F_y \circ H^s_{x,y} = H^s_{f(x),f(y)} \circ \PP F_x$;
\item[(c)] $(x,y,\xi) \mapsto H^s_{x,y}(\xi)$ is continuous when $(x,y)$ varies in the set of pairs of points in the same local stable leaf;
\item[(d)] there are $C>0$ and $\eta>0$ such that $H^s_{x,y}$ is $(C,\eta)$-H\"older for every $x$ and $y$ in the same local stable leaf.
\end{itemize}
The concept of \emph{unstable holonomy} for $\PP F$ is analogous, replacing (local) stable leaves
with (local) unstable leaves.

Stable and unstable holonomies for $\PP F$ do exist when the linear cocycle $F$ is \emph{fibre-bunched},
meaning that for some choice of the Riemannian norm $\|\cdot\|$ we have
\begin{equation}\label{eq.fibre_bunched}
\|F_x\|\|(F_x)^{-1}\| \nu(x) < 1 
\text{ and } 
\|F_x\|\|(F_x)^{-1}\| \hat\nu(x) < 1, 
\end{equation}
where $\nu(\cdot)$ and $\hat\nu(\cdot)$ are continuous functions as in \eqref{eq.partially_hyperbolic}.
It is clear from the definitions \eqref{eq.center_bunched} and \eqref{eq.fibre_bunched} that the
diffeomorphism $f$ is centre-bunched if and only if the centre derivative cocycle $F=\Dcf$ is 
fibre-bunched.

\subsubsection{Invariant disintegrations}\label{ss.disintegrations}

Since $\PP F$ is continuous and the base space $M$ is compact, for any $f$-invariant measure
$\mu$ there exist $\PP F$-invariant measures $m$ with $\pi_* m = \mu$. Fix any such measure $m$.
By the Rokhlin disintegration theorem (see \cite[Chapter~5]{FET16}), there exists a \emph{disintegration}
of $m$ into conditional probabilities along the fibres, that is, a measurable family $\{m_x: x \in M\}$
of probability measures such that $m_x(\PP\cV_x) = 1$ for $\mu$-almost every $x$ and
$$
m(U) = \int_M m_x(U \cap \PP\cV_x) \, d\mu(x)
$$
for every measurable set $U \subset \PP\cV$. The disintegration is essentially unique, meaning that any
two coincide on some full $\mu$-measure subset.

The disintegration is said to be \emph{$s$-invariant} if
$$
\left(H^s_{x,y}\right)_* m_x = m_y
\text{ for every $x$ and $y$ in the same stable leaf.}
$$
The notion of $u$-invariance is analogous, using $u$-holonomy instead.
We say that the disintegration is \emph{bi-invariant} it is both $s$-invariant and $u$-invariant.
A $\PP F$-invariant probability measure $m$ is called an \emph{$su$-state} if it admits a bi-invariant
disintegration.

\subsubsection{Invariance Principle}\label{s.invariance_principle}

A subset of $M$ is \emph{$s$-saturated} if it consists of entire stable leaves,
and \emph{$u$-saturated} if it consists of entire unstable leaves.
Moreover, we call it \emph{bi-saturated} if it is both $s$-saturated and $u$-saturated.

Assuming accessibility, if the Lyapunov exponents $\lambda_\pm(F,\cdot)$ coincide then every
$\PP F$-invariant measure that projects down to $\mu$ is an $su$-state.
That is a consequence of the following version of the Invariance Principle,
which is contained in Theorems B and C of \cite{ASV13}:

\begin{theorem}\label{t.invariance_principle}
Let $f:M \to M$ be a $C^2$ partially hyperbolic centre-bunched diffeomorphism preserving a
measure$\mu$ in the Lebesgue class of $M$.
Let $F:\cV\to\cV$ be a linear cocycle over $f$ admitting invariant stable and unstable holonomies,
and suppose that $\lambda_-(F,x) = \lambda_+(F,x)$ at $\mu$-almost every point.

Then every $\PP F$-invariant probability $m$ on $\PP\cV$ with $\pi_*m=\mu$ admits a
disintegration $\{m_x: x \in M\}$ such that
\begin{itemize}
\item[(a)] the disintegration is bi-invariant over a full-measure bi-saturated subset $M_F$ of $M$;
\item[(b)] if $f$ is accessible then $M_F=M$ and the conditional probabilities $m_x$ depend continuously
on the base point $x \in M$, relative to the weak$^*$ topology.
\end{itemize}
\end{theorem}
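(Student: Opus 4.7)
The plan is to establish part (a) via a Ledrappier-type martingale argument and then upgrade it to (b) using accessibility together with the continuity of the invariant holonomies.

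First I would apply Rokhlin's theorem to produce some measurable disintegration $\{m_x\}_{x \in M}$ of $m$ along the fibres of $\pi:\PP\cV\to M$. The key reduction is that the hypothesis $\lambda_-(F,x)=\lambda_+(F,x)$, combined with the inequalities recalled in Section~\ref{ss.exponents}, forces both extremal Lyapunov exponents of the projective cocycle $\PP F$ to vanish at $m$-almost every point. This is precisely the regime of the Invariance Principle: non-trivial variation of $m_x$ along a stable leaf would manifest as a strictly negative projective Lyapunov exponent, and so is excluded.

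For (a), I would establish $s$-invariance following Ledrappier. For a continuous test function $\varphi:\PP\cV\to\RR$, form the sequence of conditional expectations of $\varphi\circ\PP F^n$ with respect to the decreasing family of $\sigma$-algebras generated by local stable leaves of $f^n$. Martingale convergence in $L^2$, together with the Birkhoff ergodic theorem applied to the backward dynamics and the vanishing of projective exponents, forces the limit to be invariant along local stable leaves of $f^n$ for every $n$; the fibre-bunching condition~\eqref{eq.fibre_bunched}, which follows from centre-bunching of $f$, supplies the H\"older regularity of $H^s_{x,y}$ needed to control $\PP F^n$ along local stable leaves uniformly in $n$. Unwinding the cocycle identity for $\varphi$ and passing to a countable dense set of test functions yields $(H^s_{x,y})_* m_x = m_y$ for $\mu$-almost every pair $(x,y)$ in the same stable leaf. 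A symmetric argument along unstable leaves gives $u$-invariance, and $M_F$ is then taken to be the largest bi-saturated full-measure subset where both invariances hold simultaneously.

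For (b), note that $M_F$, being bi-saturated, is a union of accessibility classes; having full measure it is non-empty, hence equals $M$ whenever $f$ is accessible. To exhibit a continuous version of the disintegration, fix $p\in M_F$ where $m_p$ is well defined and, for each $q\in M$, choose an $su$-path $p=p_0,\dots,p_k=q$; define $\widetilde m_q$ as the push-forward of $m_p$ along the composition of the corresponding stable and unstable holonomies. Bi-invariance on $M_F=M$ makes $\widetilde m_q$ independent of the chosen path, and the continuous H\"older dependence of $H^s$ and $H^u$ on their endpoints, together with the continuous variation of local stable and unstable leaves, makes $q\mapsto \widetilde m_q$ weak$^*$ continuous. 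By essential uniqueness of the Rokhlin disintegration, $\widetilde m_q=m_q$ for $\mu$-almost every $q$, so $\{\widetilde m_q\}_{q\in M}$ is the continuous bi-invariant disintegration we seek. The main obstacle is the martingale step of (a): upgrading the slogan that ``vanishing projective Lyapunov exponents forbid information creation along stable leaves'' into the pointwise identity $(H^s_{x,y})_*m_x=m_y$ requires a delicate interplay between the $L^2$ convergence of the martingale, Birkhoff averages on the stable partition, and the quantitative H\"older bounds on holonomies granted by fibre-bunching.
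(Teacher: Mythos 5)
The paper does not prove this theorem at all: it is quoted from Theorems~B and~C of Avila, Santamaria, Viana~\cite{ASV13}, and the remark following the statement attributes the continuity assertion in~(b) to Section~7 of that same reference. So your proposal has to be measured against the argument in~\cite{ASV13} rather than against anything actually carried out in the present paper.

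Your sketch captures the Ledrappier-style martingale step, which indeed gives $s$-invariance of a disintegration on a full $\mu$-measure set and, symmetrically, $u$-invariance on another full-measure set; and it correctly observes for~(b) that any non-empty bi-saturated set contains an accessibility class, hence equals $M$ when $f$ is accessible. But there is a genuine gap in part~(a). The two full-measure sets produced by the martingale argument are \emph{not} $s$- or $u$-saturated, and there is no a priori reason for their intersection to contain, or to saturate to, a full-measure bi-saturated set on which a single choice of disintegration is simultaneously $s$- and $u$-invariant. You simply declare ``$M_F$ is the largest bi-saturated full-measure subset where both invariances hold,'' but the existence of any such set is precisely what must be proved; it is the technical core of~\cite{ASV13}. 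There it is obtained from an abstract ``bi-essentially invariant implies essentially bi-saturated'' result for sections, established with the julienne-density and absolute-continuity machinery originating in Burns--Wilkinson's ergodicity theorem. This is where the hypotheses that $\mu$ lies in the Lebesgue class and that $f$ is centre-bunched enter essentially, not merely to control the H\"older regularity of the holonomies as in your sketch. A secondary, smaller issue is in~(b): transporting $m_p$ along $su$-paths is well defined by bi-invariance, but weak$^*$ continuity of $q\mapsto\widetilde m_q$ requires that nearby points be joined to $p$ by $su$-paths with a uniformly bounded number of legs varying continuously with the endpoint --- a uniform local accessibility statement which you assert without argument.
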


Continuity dependence of the conditional probabilities is actually a consequence of bi-invariance,
when the diffeomorphism is accessible, as shown in \cite[Section~7]{ASV13}.

\section{Proof of Theorem~\ref{theorem_general}}\label{s.theorem_general}

The assumption that $f\in\cF^r_\mu(M,S)$ includes that $f$ is partially
hyperbolic, centre-bunched, and dynamically coherent. Since $f$ is also taken
to be accessible, it follows from \cite[Theorem~A]{PSh00} that the system
$(f,\mu)$ is ergodic. In particular, the Lyapunov exponents are constant on
a full measure subset.

\subsection{Invariant holonomies}\label{s.invariant_holonomies}

The hypothesis that $f$ is a fibred partially hyperbolic system also ensures
(see \cite[Section~3.2]{AVW22}) that it has \emph{global} stable and unstable
holonomies: for any $x, y \in M$ such that $\cF^c_x$ and $\cF^c_y$ are
contained in the same centre-stable leaf there exists a homeomorphism
(\emph{stable holonomy map})
\begin{equation}\label{eq.s-holonomy}
h^s_{x,y}:\cF^c_x \to \cF^c_y
\text{ such that }  h_{x,y}^s(p) \in \cF^s(p) \cap \cF^c_y \text{ for all } p \in \cF^c_x,
\end{equation}
and for any $x, y \in M$ such that $\cF^c_x$ and $\cF^c_y$ are contained
in the same centre-unstable leaf there exists a homeomorphism
(\emph{unstable holonomy map})
\begin{equation}\label{eq.u-holonomy}
h^u_{x,y}:\cF^c_x \to \cF^c_y
\text{ such that }  h_{x,y}^u(p) \in \cF^u(p) \cap \cF^c_y \text{ for all } p \in \cF^c_x.
\end{equation}
See also \cite[Example 2.7]{ASV13} and \cite[Section~3]{Mar16}.

By \cite[Theorem B]{PSW97}, the centre-bunching assumption ensures that these
holonomy maps are $C^1$. Their derivatives induce invariant holonomies for the projective
derivative cocycle $\PP(\Dcf): \PP(E^c) \to \PP(E^c)$ acting on the projective centre bundle $\PP(E^c)$:
take
$$
H^s_{p,q}:\PP(E^c_p) \to \PP(E^c_q), \quad H^s_{p,q} = \PP\left((Dh^s_{x,y})_p\right) 
$$
as the stable holonomy maps, for $x, y \in M$ such that $\cF^c_x$ and $\cF^c_y$
are contained in the same centre-stable leaf, $p\in\cF^c_x$ and $q=h^s_{x,y}(p)$,
and
$$
H^u_{p,q}:\PP(E^c_p) \to \PP(E^c_q), \quad H^u_{p,q} = \PP\left((Dh^u_{x,y})_p\right) 
$$
as the unstable holonomy maps, for $x, y \in M$ such that $\cF^c_x$ and $\cF^c_y$
are contained in the same centre-unstable leaf, $p \in \cF^c_x$ and $q = h^u_{x,y}(p)\in\cF^c_y$.

Recall that we call $su$-state any $\PP(\Dcf)$-invariant measure $m$
on $\PP(E^c)$ that projects down to $\mu$ on $M\times S$ and admits a bi-invariant
disintegration $\{m_q : q \in M\times  S\}$ along the fibres of $\PP(E^c)$:
\begin{equation}\label{eq.su-invariance}
\begin{aligned}
\left(H_{q,r}^s\right)_* m_q & = m_r \text{ for any $q,r$ in the same centre-stable leaf}\\
\left(H_{q,r}^u\right)_* m_q & = m_r \text{ for any $q,r$ in the same centre-unstable leaf},
\end{aligned}
\end{equation}

Initially, let us assume that $f$ has no $su$-state.
Then, by the Invariance Principle (Theorem~\ref{t.invariance_principle}(b)),
the two centre Lyapunov exponents must be distinct.
Non-existence of $su$-states also implies that $f$ is a continuity point
for the centre Lyapunov exponents.
That follows from \cite[Proposition~4.8]{LMY18},
which is itself a version of \cite[Proposition~6.3]{Extremal}.
In this way we get alternative (1) in the theorem.

\subsection{Invariant disintegrations}\label{s.disintegrations}

From now on, assume that there does exist some $su$-state $m$,
and let  $\{m_q : q \in M\times  S\}$ be a bi-invariant disintegration along the
fibres of $\PP(E^c)$. As observed previously (see also \cite[Section~7]{ASV13}),
the conditional probabilities $m_q$ depend continuously on the base point $q$ relative
to the weak$^*$ topology. Since $m$ is $\PP(\Dcf)$-invariant, we have that
$$
\PP(\Dcf)_*m_q = m_{f(q)}
\text{ for $\mu$-almost every point $q$.}
$$
This, together with continuity and the fact that $\mu$ is supported
on the whole $M \times  S$, ensures that
\begin{equation}\label{eq.invariance}
\PP(\Dcf)_*m_q = m_{f(q)} \text{ for every } q \in M \times  S.
\end{equation}

Suppose that there exists a point $p \in M \times S$ such that $m_p$ admits
an atom with largest mass, and this mass is greater than or equal to $1/2$.
Then, since $f$ is accessible and the disintegration is $su$-invariant,
the same is true at every point $q\in M \times S$.
Let $v_q$ denote the corresponding atom.
The map $q \mapsto v_q$ defines a continuous line bundle on $M\times S$ tangent to the centre leaves at every point.
By the Poincaré--Hopf theorem this implies that the centre leaves are tori, and so $S = \TT^2$.
Moreover, the properties \eqref{eq.su-invariance} and \eqref{eq.invariance}
give that this line bundle
is $Df$-invariant and invariant under the stable and unstable holonomies. 

Similarly, suppose that some $m_p$ admits a pair of atoms $\{u_p, v_p\}$ with
masses equal to $1/2$.
Then, just as before, the same must hold at every point $q \in M \times S$:
let $\{u_q, v_q\}$ denote the corresponding pair of atoms. 
The map $q \mapsto \{u_q, v_q\}$ defines a pair of transverse continuous line fields.
Again by the Poincaré--Hopf theorem, existence of such a pair implies that $S = \TT^2$.
Finally, the properties \eqref{eq.su-invariance} and \eqref{eq.invariance} ensure that this pair of
transverse continuous line fields is $Df$-invariant and invariant under the stable and unstable holonomies. 

The situations in the previous couple of paragraphs correspond to alternative (2) in the theorem. 

\subsection{Invariant conformal structures}\label{s.invariant_structures}

We are left to consider the case when the conditional probability measures
$m_q$ have no atoms of mass greater than or equal to $1/2$.
We are going to show that, via the conformal barycentre construction in Section~\ref{s.conformal_barycentre}, each $m_q$ determines a unique
conformal structure on the corresponding centre subspace $E^c_q$.
In that way, the leaves of the centre foliation $\cF^c$ are endowed with
Riemann surface structures which are invariant under stable and unstable holonomies and the diffeomorphism itself.
This goes as follows.

The boundary $\partial\HH$ is naturally identified with the \emph{real} projective space
$\PP(\CC)=\PP(\RR^2)$ through the map $x \mapsto [x:1]$.
Consider any linear isomorphism $L:E_q^c \to \CC$ and its projectivisation
$\PP L:\PP(E_c^q) \to \PP(\CC)$. Let $\cM$ be the push-forward of $m_q$ under $\PP L$. 
Consider the conformal structure defined on $\CC$ by the conformal barycentre $B(\cM)$,
and transport it to $E_q^c$ through $L$.
The fact that the barycentre is conformally invariant ensures that the conformal structure
thus defined on $E^c_q$ does not depend on the choice of $L$, as we are going to explain.

Given any other linear isomorphism $L':E_q^c \to \CC$, consider
$$
A = L' \circ L^{-1} = \left(\begin{array}{cc} a & b \\ c & d \end{array} \right):\CC\to\CC.
$$
The projectivisation $\PP A:\PP(\CC) \to \PP(\CC)$ is given in homogeneous coordinates by
$$
[x:1] \mapsto [\phi_A(x) :1],
$$
and so the probability measure $\cM'$ on $\partial\HH$ associated to $L'$ coincides with $(\phi_A)_*\cM$.
By conformal invariance, it follows that $B(\cM') = \phi_A(B(\cM))$ and so $A$ maps the
conformal structure defined by $B(\cM)$ to the one defined by $B(\cM')$.
Thus, the conformal structures defined on $E^c_q$ through $L$ and through $L'$ coincide.

This completes the explanation of why the probability measure $m_q$ defines a
conformal structure on the vector space $E^c_q$, for each $q \in M\times S$.
Remark~\ref{r.continuity} ensures that this conformal structure varies continuously 
with $q\in M \times  S$.
The fact that the disintegration $\{m_q: q \in M \times  S\}$ is invariant under $f$
and under the holonomies $h^s$ and $h^u$, ensures that the conformal structures obtained
in this way are invariant under the centre derivative $\Dcf$, as well as under the
holonomies $H^s$ and $H^u$.
In this way, every leaf of $\cF^c$ is endowed with a Riemann surface structure,
and these Riemann surface structures are preserved by the dynamical system $f$ and 
its holonomies $h^s$ and $h^u$.

In particular, since $f$ is accessible, the group of conformal automorphisms acts
transitively on every centre leaf $\cF^c_x$. According to \cite[Theorem V.4]{FaK92},
the only compact Riemann surfaces with that property are the sphere $\SS^2$ and the
torus $\TT^2$. This gives alternative (3) in the theorem.

The proof of Theorem~\ref{theorem_general} is now complete.

\subsection{Proof of Corollary~\ref{corollary_general}}

Assume that $f\in\cF^r_\omega(M,S)$ for some symplectic form $\omega$ on $M \times S$.
Then $\dim E^u = \dim E^s$, and we may use the symmetry property \eqref{eq.symmetric}
to conclude that the two centre Lyapunov exponents are symmetric to each other.
Thus, the fact they are distinct, cf. alternative (1) in Theorem~\ref{theorem_general},
implies that they are non-zero, and so $f$ is non-uniformly hyperbolic.

As observed in Section~\ref{s.lyapunov}, the restriction
$\omega^c = \left.\omega\right|_{E^c}$ defines a symplectic form on the
centre leaves which is, clearly, preserved by the restriction of $f$ to the leaves.
Together with the conformal structure, this symplectic form defines an inner product
on each centre space $E^c_q$, varying continuously with $q$.
Thus one gets a continuous Riemannian metric on each centre leaf.

It is clear that this Riemannian metric is invariant under $f$, because both the
conformal structure and the symplectic form are. We have also seen that the conformal
structure is invariant under the stable and unstable holonomies of $f$.
The next lemma asserts that the same is true for the symplectic form
$\omega^c$. So, the Riemannian metric is also invariant under the stable
and unstable holonomies of $f$, as claimed.

This means that we have reduced the proof of Corollary~\ref{corollary_general} to

\begin{lemma}\label{l.symplectic}
For any $x,y\in M$ such that $\cF^c_x$ and $\cF^c_y$ are contained in the same centre-stable leaf and
$p \in \cF^c_x$,
$$
\omega^c_p(u,v) = \omega^c_q\left((Dh^s_{x,y})_p u,(Dh^s_{x,y})_p v \right), 
\qquad q = h^s_{x,y}(p)
$$
for any $u,v \in E^c_p$. A dual statement holds for the unstable holonomy $h^u_{x,y}$
when $x$ and $y$ are such that $\cF^c_x$ and $\cF^c_y$ are contained in the same
centre-unstable leaf.
\end{lemma}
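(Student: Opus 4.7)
My plan is to reduce the stable-holonomy invariance to a scalar Jacobian identity and then kill the Jacobian by iterating forward under $f$, using exponential contraction along stable leaves. Since $\dim E^c_p = 2$, the space of alternating $2$-forms on $E^c_p$ is one-dimensional and $\omega^c_p$ is a nowhere-vanishing section of it. I would therefore introduce the continuous function $J \colon \cF^c_x \to \RR$ defined by
\[
(h^s_{x,y})^* \omega^c_{q} \;=\; J(p)\,\omega^c_p, \qquad q := h^s_{x,y}(p),
\]
so that the lemma is equivalent to the identity $J \equiv 1$.

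The first step is the equivariance identity $J(p) = J_n(f^n(p))$ for every $n \ge 0$, where $J_n$ is the corresponding Jacobian attached to the holonomy $h^s_{f^n(x), f^n(y)} \colon \cF^c_{f^n(x)} \to \cF^c_{f^n(y)}$. This follows directly by pulling $\omega^c$ back along both sides of the relation $f^n \circ h^s_{x,y} = h^s_{f^n(x), f^n(y)} \circ f^n$ and using that $f$ is $\omega$-symplectic with $Df$-invariant centre bundle, so that $(f^n|_{E^c})^* \omega^c = \omega^c$ pointwise. Since the space of alternating $2$-forms on $E^c$ is one-dimensional, the scalar on each side is forced to agree.

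The second and harder step is to show $J_n(f^n(p)) \to 1$ as $n \to \infty$. The points $p$ and $q$ lie on the same stable leaf, so $\dist_{\cF^s}(f^n(p), f^n(q)) \to 0$ exponentially. The centre-bunching assumption guarantees via~\cite{PSW97}, already invoked earlier in this section, that $h^s$ is $C^1$ and that its derivative depends continuously on the two base points; since this derivative is the identity when the two base points coincide, it tends to the identity in any local trivialisation of $E^c$ as the base points coalesce, and hence $J_n(f^n(p)) \to 1$. The main obstacle, as I see it, is that the iterates $f^n(p)$ and $f^n(q)$ need not converge to a single point of $N$, so the local continuity of the holonomy derivative must be converted into a statement uniform in the base point. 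I would handle this by a compactness extraction, passing to a subsequence with $f^{n_k}(p) \to p^* \in N$ (so that $f^{n_k}(q) \to p^*$ as well) and working in a fixed trivialisation of $E^c$ in a neighborhood of $p^*$. Combined with Step~1 this forces $J(p) = 1$, which gives the stable case. The unstable case is the exact dual, obtained by iterating $f^{-n}$ and observing that $\cF^u$ for $f$ is $\cF^s$ for the $\omega$-symplectic, centre-bunched diffeomorphism $f^{-1}$.
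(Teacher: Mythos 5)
Your proof is correct and takes essentially the same approach as the paper: both iterate forward under $f$ so that the stable distance $\dist(f^n(p),f^n(q))$ contracts to zero, use the $\omega$-invariance of the centre derivative $\Dcf$ to propagate the Jacobian, and invoke continuity of the holonomy derivative near the diagonal to conclude it equals $1$. The only cosmetic difference is that the paper introduces a horizontal projection $\Pi_{p,q}$ with $\omega^c$-determinant $\Delta_{p,q}$ and substitutes the limit formula for $(Dh^s_{x,y})_p$ directly into $\omega^c_q$, whereas you package the argument as a scalar Jacobian identity $J\equiv 1$; moreover, since $N$ is compact and the holonomy derivative is jointly continuous and equals $\Id$ on the diagonal, the limit $J_n(f^n(p))\to 1$ holds uniformly, so your subsequence extraction is an unnecessary detour.
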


\begin{proof}
Since the centre bundle $E^c$ of $f$ is uniformly close to the vertical direction $\{0\} \times TS$,
horizontal projection induces a linear isomorphism $\Pi_{p.q}:E^c_p \to E^c_q$ between the centre
spaces of any two points $p, q \in M \times S$. The fact that the centre bundle is continuous 
ensures that $\Pi_{p.q}$ is uniformly close to the identity if the distance $\dist(p,q)$ is small.
We denote by $\Delta_{p,q}$ the determinant of $\Pi_{p.q}$ with respect to the symplectic form,
characterised by
\begin{equation}\label{eq.determinant}
\omega^c_q(\Pi_{p,q} u, \Pi_{p,q} v) = \Delta_{p,q} \, \omega_p^c(u,v)
\text{ for any } u, v \in E^c_p.
\end{equation}
Since $\omega^c$ is continuous, we have that $\Delta_{p.q}$ is uniformly close to $1$ if
$\dist(p,q)$ is small.

Given any $x, y \in M$ such that $\cF^c_x$ and $\cF^c_y$ are in the same centre-stable leaf,
and given $p \in \cF^c_x$ and $q = h_{x,y}^s(p)$, we have that (compare \cite[Section~3]{Mar16})
\begin{equation}\label{eq.chain}
\left(Dh^s_{x,y}\right)_p = \lim_n (\Dcf^{-n})_{f^n(q)} \circ \Pi_{f^n(p),f^n(q)} \circ (\Dcf^n)_p
\end{equation}
Thus, recalling that the centre derivative $\Dcf$ preserves the symplectic form $\omega^c$,
$$
\begin{aligned}
\omega^c_q\big((Dh^s_{x,y})_p u,(Dh^s_{x,y})_p v \big)
& = \lim_n \omega^c_q\Big((\Dcf^{-n})_{f^n(q)} \circ \Pi_{f^n(p),f^n(q)} \circ (\Dcf^n)_p u, \\
& \hspace{5cm}(\Dcf^{-n})_{f^n(q)} \circ \Pi_{f^n(p),f^n(q)} \circ (\Dcf^n)_p v \Big) \\
& = \lim_n \omega^c_{f^n(q)} \Big(\Pi_{f^n(p),f^n(q)} \circ (\Dcf^n)_p u,
                                   \Pi_{f^n(p),f^n(q)} \circ (\Dcf^n)_p v \Big) \\
& = \lim_n \Delta_{f^n(p),f^n(q)} \omega^c_{f^n(p)} \Big((\Dcf^n)_p u, (\Dcf^n)_p v \Big)
 = \lim_n \Delta_{f^n(p),f^n(q)} \, \omega^c_{p} \big(u, v\big)
\end{aligned}
$$
for any $u, v \in E^c_p$. Since $\dist(f^n(p),f^n(q))$ converges to zero as $n\to \infty$,
because $p$ and $q$ are in the same stable leaf, the limit on the right hand side
is equal to $\omega^c_p(u,v)$, and so the proof is complete.
\end{proof}

\section{Density of stable accessibility}\label{s.accessibility}\label{s.density_accessibility}

A volume-preserving partially hyperbolic $C^r$, $r\ge 2$ diffeomorphism is said to be
\emph{stably accessible} if every volume-preserving $C^r$ diffeomorphism in a
$C^1$-neighbourhood is accessible. In this section, we check that stable accessibility
is dense among maps of the form  \eqref{eq.affine_extension} and so the accessibility
assumption in Theorem~\ref{theorem_torus} is quite mild.
The proof of the theorem does not depend on this fact.

\begin{theorem}\label{t.accessibility}
Every skew-product \eqref{eq.affine_extension} may be approximated by
another skew-product over $g$ which is stably accessible in the space
of volume-preserving maps.
\end{theorem}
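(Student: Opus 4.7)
The plan is to describe the stable and unstable holonomies of $f_0$ explicitly as translations of the fibre $\TT^2$, to perturb the cocycle $w_0$ so that two suitable $su$-loops generate a dense translation action on the fibre, and to check that this density is robust among volume-preserving perturbations.

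Because $L_0\in\SL(2,\ZZ)$ is elliptic, the powers $L_0^n$ are uniformly bounded and iteration of \eqref{eq.affine_extension} yields
\[
f_0^n(x,v)=\Big(g_0^n(x),\,L_0^n v+\sum_{k=0}^{n-1}L_0^{n-1-k}w_0(g_0^k(x))\Big).
\]
Using this formula and the exponential contraction of $g_0$ along its stable leaves, a direct computation shows that the stable manifold of $(x,v)$ for $f_0$ is the graph $\{(y,v+\sigma^s(x,y)):y\in \cF^s_{g_0}(x)\}$, where
\[
\sigma^s(x,y)=\sum_{k\ge 0}L_0^{-k-1}\big(w_0(g_0^k(y))-w_0(g_0^k(x))\big),
\]
and similarly for unstable manifolds. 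Consequently, every stable or unstable holonomy between two centre fibres is a translation of $\TT^2$, and every $su$-loop $\gamma$ based at $x\in M$ induces a fibre return map that is a translation by some $\tau(\gamma)\in\TT^2$. The set $H(f_0)=\{\tau(\gamma):\gamma\text{ an }su\text{-loop at }x\}$ is thus a subgroup of $\TT^2$, and accessibility of $f_0$ is equivalent to $H(f_0)$ being dense in $\TT^2$.

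Fix $x\in M$ and pick two $su$-loops $\gamma_1,\gamma_2$ based at $x$ whose legs meet the $g_0$-orbits in pairwise disjoint finite sets of points; such loops exist in abundance since $g_0$ is transitive Anosov. Replace $w_0$ by $w_\epsilon=w_0+\epsilon\tilde w$ with $\tilde w:M\to\RR^2$ a small $C^r$ perturbation. By the explicit formula above, the variation of $\tau(\gamma_i)$ is a finite, absolutely convergent linear combination of values $L_0^{\pm k}\tilde w(p)$ at the finitely many points $p$ lying on these orbit pieces. Supporting $\tilde w$ in tiny disjoint neighbourhoods of well chosen such points and freely prescribing its vector values there, one obtains a surjection onto an open neighbourhood of $(\tau(\gamma_1),\tau(\gamma_2))$ in $\TT^2\times\TT^2$. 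Pairs $(\tau_1,\tau_2)\in\TT^2\times\TT^2$ whose components generate a dense subgroup of $\TT^2$ form a dense open set, so an arbitrarily $C^r$-small choice of $\tilde w$ yields an $f_\epsilon$ with $H(f_\epsilon)$ dense in $\TT^2$, and hence an accessible $f_\epsilon$.

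For stability, consider any volume-preserving $C^1$-small perturbation $f$ of $f_\epsilon$. Normal hyperbolicity \cite{HPS77} provides a leaf conjugacy $H_f$ between the centre foliations of $f_\epsilon$ and $f$, and the stable and unstable holonomies of $f$ along the loops $\gamma_1,\gamma_2$ are $C^0$-close, through $H_f$, to the translations by $\tau(\gamma_i,\epsilon)$. Minimality of a $\ZZ^2$-action on $\TT^2$ generated by two rationally independent translations is preserved under small $C^0$-perturbations of the generators among homeomorphisms of $\TT^2$, so every centre fibre of $f$ lies in a single accessibility class; combining this with transitivity of $g_0$ and the density of stable and unstable leaves of $g_0$ gives full accessibility of $f$. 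The main obstacle is exactly this stability step: once one leaves the skew-product class, the $su$-holonomies of $f$ are no longer honest translations of $\TT^2$, and one must establish that an equicontinuous minimal $\ZZ^2$-action by translations of $\TT^2$ is structurally stable among pseudogroups generated by nearby homeomorphisms, so that accessibility classes for $f$ continue to exhaust entire fibres.
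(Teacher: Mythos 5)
Your identification of the $su$-holonomies of $f_0$ as translations of the fibre is correct, and it parallels the paper's Lemma~5.1 (holonomies are left-translations on a Lie group). However, the rest of the argument has two genuine gaps, the second of which you partly acknowledge.

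First, accessibility of $f_0$ is \emph{not} equivalent to density of the holonomy group $H(f_0)$ in $\TT^2$: accessibility requires the accessibility class of $(x,e)$ to be the entire manifold, so in the skew-product setting one needs $H(f_0)$ to equal $\TT^2$, not merely be dense. Your perturbation makes two loop holonomies rationally independent, which gives a \emph{countable} dense subgroup; the accessibility class of $(x,e)$ then has countable dense intersection with the fibre and is not the whole fibre. (The accessibility class is a priori neither open nor closed, so you cannot pass from density to equality without further argument.) The paper never attempts to prove density; instead, following \cite{BuW99,Bri75a}, it works with $\dim\TT^2=2$ loops $\sigma_1,\sigma_2$, perturbs $w_0$ in disjoint neighbourhoods so that the corresponding holonomy images are not contained in any proper subgroup, and then upgrades this \emph{via a topological engulfing criterion}: one exhibits a continuous family of $su$-paths indexed by a compact surface $Z$ with boundary whose endpoint map $z\mapsto\Psi(z,1)$ has positive degree onto a neighbourhood of $(x,e)$ in $\{x\}\times\TT^2$. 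Positive degree forces the accessibility class of $(x,e)$ to contain an open neighbourhood of $(x,e)$ in the fibre, which combined with the group structure (and transitivity of $g_0$) gives $H=\TT^2$, hence honest accessibility.

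Second, the stability step of your argument is the crux, and the route you sketch does not work. Minimality of a $\ZZ^2$-action generated by two rationally independent translations of $\TT^2$ is \emph{not} stable under $C^0$-small perturbations of the generators among homeomorphisms (already for one generator, a $C^0$-small perturbation of an irrational rotation may acquire a fixed point), and as you note, once $f$ leaves the skew-product class the $su$-holonomies cease to be translations. This is exactly why the paper invokes the engulfing/degree criterion of Burns--Wilkinson~\cite[Theorem~9.1]{BuW99}: positive degree of the endpoint map $z\mapsto\Psi(z,1)$ is a $C^1$-open condition because the stable and unstable foliations (and hence the family $\Psi$) depend continuously on the diffeomorphism, so the perturbed endpoint map still has positive degree and the accessibility class still engulfs an open set. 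That is what makes accessibility \emph{stable}, and it sidesteps both of the problems above simultaneously.
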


The special case of rotation extensions (that is, $L=\Id$) was proved in \cite{BuW99},
using also ideas from \cite{BP74,Bri75a,Bri75b}, and our arguments here are an adaptation.
We outline how to deal with the presence of the elliptic coefficient $L$, referring the
reader to the previous papers for more details. It is worth mentioning that for circle
rotation extensions in dimension $3$ any perturbation is accessible unless it
(or some finite-order quotient) is smoothly conjugate to the product of an Anosov
diffeomorphism with a rotation~\cite{BPW00}.
Another related result is \cite[Theorem~6.1]{AvV20}.

To better highlight the analogy to the previous papers, in this section we
refer to a slightly more general setting. Namely, we consider maps of the form 
\begin{equation}
f_w:M\times G\to M \times G, \quad
f_w(x,\theta) = (g(x), w(x)L(\theta)),
\end{equation}
where $g$ is a (transitive) Anosov diffeomorphism on a compact manifold,
$G$ is a compact connected Lie group (the case we are most interested in is $G=\TT^2$),
and $L:G\to G$ is an elliptic group isomorphism: by \emph{elliptic} we mean that
$\|DL^n\|$ is bounded uniformly over all $n\in\ZZ$.
We are going to explain that $f_w$ is accessible for an open and dense subset of
$C^r$ maps $w:M\to G$.

functions $w$.

\subsection{Holonomy maps}

Let $\cW^u$ and $\cW^s$ denote, respectively, the unstable foliation and the stable foliation
of the Anosov diffeomorphism $g$. The assumption on $L$ ensures that $f_w$ is partially hyperbolic
and centre-bunched. It is also dynamically coherent:
the leaves of the \emph{centre-unstable foliation} $\cF^{cu}$ are the products
$\cW^u(x) \times G$, $x\in M$ and, analogously, the leaves of the \emph{centre-stable foliation}
$\cF^{cs}$ are the products $\cW^s(x) \times G$, $x\in M$.
Then the \emph{centre foliation} $\cF^c$ coincides with the vertical fibration $\{\{x\}\times G: x \in M\}$.
It also follows that the leaves of the \emph{unstable foliation} $\cF^u$ are
graphs over the unstable manifolds $\cW^u(x)$ of the Anosov diffeomorphism $g$,
and a corresponding fact holds for the \emph{stable foliation} $\cF^s$.

Given points $x$ and $y$ in the same leaf of $\cW^u$, the projection along the
leaves of $\cF^u$ defines a continuous map
$$
h_{x,y}^u: \{x\} \times G \to \{y\} \times G. 
$$
Similarly, the projection along the leaves of $\cF^s$ defines a continuous map
$$
h_{z,w}^s: \{z\} \times G \to \{w\} \times G
$$
for any $z$ and $w$ in the same leaf of $\cW^s$. These are, respectively, unstable and stable holonomies for $f$.

In what follows we sometimes identify a fibre $\{x\} \times G$ to the group $G$ itself,
in the obvious way.
Thus we also view the $h^u_{x,y}$ and $h^s_{w,z}$ as transformations in $G$.

\begin{lemma}\label{l.left-translations}
Every $h^u_{x,y}$ and $h^s_{z,w}$ is given by a left-translation on $G$.
\end{lemma}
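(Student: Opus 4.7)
The plan is to exploit the symmetry of $f_w$ under right translations on the fibres. For each $\tau\in G$, define $R_\tau:M\times G\to M\times G$ by $R_\tau(x,\theta)=(x,\theta\tau)$. A direct computation, using only that $L:G\to G$ is a group homomorphism, gives
\[
f_w\circ R_\tau(x,\theta)=\bigl(g(x),w(x)L(\theta)L(\tau)\bigr)=R_{L(\tau)}\circ f_w(x,\theta),
\]
so by induction
\[
f_w^{\,n}\circ R_\tau=R_{L^{n}(\tau)}\circ f_w^{\,n}\qquad\text{for every }n\in\ZZ.
\]
Notice this intertwining does not involve $\theta$ at all; right translations conjugate $f_w$ to itself up to a reparametrisation of the ``shift'' by $L$.

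Next I would fix a bi-invariant Riemannian metric on $G$ (which exists since $G$ is compact and connected) and combine it with any metric on $M$ to metrise $M\times G$; then every $R_\tau$ acts as an isometry that preserves the centre foliation $\cF^c=\{\{x\}\times G\}$. Recall that the unstable leaf of a point $p$ is characterised as the set of $q$ with $\dist(f_w^{-n}(p),f_w^{-n}(q))\to 0$ exponentially. If $(x,\theta)$ and $(y,\eta)$ lie in the same unstable leaf, then by the intertwining and the isometry property of $R_{L^{-n}(\tau)}$,
\[
\dist\!\bigl(f_w^{-n}R_\tau(x,\theta),\,f_w^{-n}R_\tau(y,\eta)\bigr)
=\dist\!\bigl(R_{L^{-n}(\tau)}f_w^{-n}(x,\theta),\,R_{L^{-n}(\tau)}f_w^{-n}(y,\eta)\bigr)
=\dist\!\bigl(f_w^{-n}(x,\theta),f_w^{-n}(y,\eta)\bigr),
\]
which still goes to zero. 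By uniqueness of the unstable foliation it follows that $(x,\theta\tau)$ and $(y,\eta\tau)$ also lie in the same unstable leaf, hence $h^u_{x,y}(\theta\tau)=h^u_{x,y}(\theta)\cdot\tau$ for every $\theta,\tau\in G$. Substituting $\theta=e$ yields $h^u_{x,y}(\tau)=h^u_{x,y}(e)\cdot\tau$, so $h^u_{x,y}$ is the left-translation by the group element $h^u_{x,y}(e)$. The stable case is identical, using forward iterates in place of backward ones and the forward characterisation of stable leaves.

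The only real obstacle is making the ``uniqueness of unstable leaves'' step clean: one must check that the above computation really forces $R_\tau$ to permute unstable leaves, not just to produce points whose backward orbits are close. This is handled by the standard fact that unstable leaves of a partially hyperbolic diffeomorphism partition the manifold and are exactly the strong stable/unstable classes under backward/forward convergence, so two points with exponentially converging backward orbits must lie in the same unstable leaf. The role of the ellipticity of $L$ is mild but essential here: it only guarantees that $\{R_{L^{n}(\tau)}\}_{n\in\ZZ}$ is a family of uniformly bounded isometries, which is all we use.
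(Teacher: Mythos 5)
Your proof is correct, but it takes a genuinely different route from the paper's. The paper proves Lemma~\ref{l.left-translations} by an explicit computation: it writes $h^u_{x,y}$ as the limit $\lim_n \cG^n_{g^{-n}(y)} \circ \cG^{-n}_x$ (the standard ``graph transform'' expression for the holonomy), unwinds the cocycle product formulas for $\cG^n_a$ and $\cG^{-n}_b$ using that $L$ is a homomorphism, and observes that each finite approximation $\cG^n_{g^{-n}(y)}\circ\cG^{-n}_x$ is already a left-translation, hence so is the limit. It then deduces the commutation relation $\cR_\tau\circ h^u_{x,y} = h^u_{x,y}\circ\cR_\tau$ (the paper's \eqref{eq.commute1}) as a consequence. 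You invert this logic: you establish the equivariance $f_w^n\circ\cR_\tau=\cR_{L^n(\tau)}\circ f_w^n$ first, use a bi-invariant metric together with the backward-convergence characterisation of unstable leaves (and uniqueness of the unstable foliation) to deduce that $\cR_\tau$ permutes unstable leaves, thereby obtaining the commutation relation directly, and then read off the left-translation property by evaluating at $\theta=e$. Your approach is more conceptual and avoids the algebraic bookkeeping; the paper's approach is more elementary in that it requires nothing beyond the explicit formula for the holonomy, and it produces the finite-time approximations explicitly, which can be useful.

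One small inaccuracy in your closing remark: right translations $\cR_\sigma$ are isometries of $G$ for a bi-invariant metric no matter what $\sigma$ is, so the statement that ellipticity is needed ``to guarantee that $\{\cR_{L^n(\tau)}\}$ is a family of uniformly bounded isometries'' is not quite the point. Ellipticity is needed so that the vertical direction is in fact the centre bundle (i.e.\ $f_w$ is partially hyperbolic with $E^c = \{0\}\times TG$), and also so that two distinct points of the same fibre cannot have backward orbits that converge --- which is what makes your backward-convergence characterisation of the unstable foliation valid in the fibre direction. That is where ellipticity enters your argument, not in the isometry property of $\cR_\sigma$.
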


\begin{proof}
For $n\in\ZZ$, let us write $f_w^n(x,\theta)=(g^n(x),\cG^n_x(\theta))$.
Then (compare \cite[Proposition~3.2]{ASV13}),
\begin{equation}\label{eq.hu}
h^u_{x,y} = \lim_n \cG^n_{g^{-n}(y)} \circ \cG^{-n}_x.
\end{equation}
An induction argument gives that, for any $n \ge 1$ and $a, b \in M$, 
$$
\begin{aligned}
\cG^n_a
& =
w\left(g^{n-1}(a)\right)
L\left(w\left(g^{n-2}(a)\right)\right)
\cdots 
L^{n-2}\left(w\left(g(a)\right)\right)
L^{n-1}\left(w\left(a\right)\right)
L^n \\ 
\cG^{-n}_b
& = 
L^{-1}\left(w\left(g^{-n}(b)\right)^{-1}\right) L^{-2}\left(w\left(g^{-n+1}(b)\right)^{-1}\right)
\cdots
L^{-n+1}\left(w\left(g^{-2}(b)\right)^{-1}\right)
L^{-n}\left(w\left(g^{-1}(b)\right)^{-1}\right)
L^{-n}.
\end{aligned}
$$
Taking $a = g^{-n}(y)$ and $b = x$, we see that 
$$
\begin{aligned}
\cG^n_{g^{-n}(y)} \circ \cG^{-n}_x(\theta)
& =
w\left(g^{-1}(y)\right)
L\left(w\left(g^{-2}(y)\right)\right)
\cdots 
L^{n-2}\left(w\left(g^{-n+1}(y)\right)\right)
L^{n-1}\left(w\left(g^{-n}(y)\right)\right) \cdot \\
& 
\hspace{1cm} L^{n-1}\left(w\left(g^{-n}(x)\right)^{-1}\right) L^{n-2}\left(w\left(g^{-n+1}(x)\right)^{-1}\right)
\cdots
L^{1}\left(w\left(g^{-2}(b)\right)^{-1}\right)
w\left(g^{-1}(b)\right)^{-1}
\theta
\end{aligned}
$$
is a left-translation for every $n \ge 1$,
and then so is the limit $h^u_{x,y}$.
The argument for $h^s_{z,w}$ is analogous. 
\end{proof}

Next, for each $\tau\in G$, define the \emph{right-translation maps}
$$
\cR_\tau:M\times G \to M\times G, 
\quad
\cR_\tau(x,\theta)=(x,\theta\tau).
$$
Lemma~\ref{l.left-translations} implies that the invariant holonomies
commute with the right-translation maps:
\begin{equation}\label{eq.commute1}
\cR_\tau \circ h^u_{x,y} =  h^u_{x,y} \circ \cR_\tau
\text{ and }
\cR_\tau \circ h^s_{z,w} =  h^s_{z,w} \circ \cR_\tau
\end{equation}
for every $x$, $y$, $z$, $w$ and $\tau$. This means that the unstable
foliation $\cF^u$ and the stable foliation $\cF^s$ are both preserved
by every right-translation map $\cR_\tau$.

\subsection{Holonomy groups}

Let $\gamma$ be any $su$-path in $M$, that is, any piecewise
differentiable oriented curve such that each leg is contained in a leaf of either
$\cW^u$ or $\cW^s$.
Denote by $h_\gamma$ the composition of the unstable holonomies and
the stable holonomies associated to the legs of $\gamma$, in the
natural order. Clearly, $h_\gamma$ is a left-translation and
\begin{equation}\label{eq.commute2}
\cR_\tau \circ h_\gamma = h_\gamma \circ \cR_\tau
\end{equation}
for every $\tau$ and $\gamma$.

Let $e$ denote the neutral element of the group $G$.
For each $x\in M$, define
$$
H_x^0 = \left\{h_\sigma(e): \sigma \text{ is a homotopically null $su$-loop at $x$} \right\}.
$$
Note that $H_x^0$ is a subgroup of $G$.
Indeed, given any $\theta_1=h_{\sigma_1}(e)$ and $\theta_2=h_{\sigma_2}(\theta)$
in $H_x^0$, it follows from \eqref{eq.commute2} that
$$
\theta_1\theta_2
= h_{\sigma_1}(e)\theta_2
= h_{\sigma_1}(\theta_2)
= h_{\sigma_1}\left(h_{\sigma_2}(\theta)\right)
= h_{\sigma_2*\sigma_1}(\theta)
$$
where $\sigma_2*\sigma_1$ is the loop obtained by concatenating $\sigma_2$
with $\sigma_1$ (in this order).
Since the concatenation is still homotopically null, this proves that 
$\theta_1\theta_2\in H_x^0$.
Similarly, given any $\theta=h_\sigma(e)$ denote by $-\sigma$ the loop
obtained by reversing the orientation of $\sigma$. It is clear that
$h_\sigma \circ h_{-\sigma}=\Id$. Since $h_\gamma$ is a left-translation,
this means that
\begin{equation}\label{eq.inverse}
h_\sigma(e)^{-1} = h_{-\sigma}(e).
\end{equation}
It is clear that $-\sigma$ is homotopically null, so this implies that $h_\sigma(e)^{-1}\in H_x^0$.
That completes the proof that $H_x^0$ is a subgroup of $G$.

The \emph{holonomy groups} $H_x^0$ and $H_y^0$ corresponding to two different
points $x$ and $y$ in $M$ are conjugate: there exists $\tau\in G$ such that
\begin{equation}\label{eq.conjugate}
H_y^0 = \tau H_x^0 \tau^{-1}.
\end{equation}
To see this, fix an $su$-path $\beta$ from $x$ to $y$ in $M$ and denote $\tau = h_\beta(e)$.
The relation \eqref{eq.inverse} gives that $h_{-\beta}(e) = \tau^{-1}$.
For any $\theta\in H_x^0$, consider a homotopically null loop $\sigma$ at $x$
such that $\theta=h_\sigma(e)$, and let $\sigma'=\beta*\sigma*(-\beta)$.
Since the holonomy maps are left-translations:
$$
h_{\sigma'}(e)
= h_\beta\left(h_\sigma\left(h_{-\beta}(e)\right)\right)
= h_\beta\left(h_\sigma\left(\tau^{-1}\right)\right)
= h_\beta\left(h_\sigma\left(e\right)\tau^{-1}\right)
= h_\beta\left(\theta\tau^{-1}\right)
= \tau\theta\tau^{-1}.
$$
It is clear that $\sigma'$ is a homotopically null loop at $y$, and so $h_{\sigma'}(e) \in H_y^0$.
Thus this proves that $\tau H_x^0 \tau^{-1}$ is contained in $H_y^0$.
Reversing the roles of $x$ and $y$, we get the other inequality, and thus \eqref{eq.conjugate}
follows.

\subsection{Stable accessibility}

In particular, the following condition is independent of $x\in M$:
\begin{equation}\label{eq.accessible}
H_x^0 = G.
\end{equation}
This is relevant because \eqref{eq.accessible} implies that $f$ is accessible: it clearly implies
that every point in the fibre $\{x\}\times G$ is connected to the unit element $(x,e)$ by an
$su$-path in $M\times G$; for any other fibre $\{y\}\times G$, consider an $su$-path $\beta$
from $x$ to $y$ in $M$ and observe that $(x,e)$ is connected to $h_\gamma(e)\in\{y\}\times G$
by some $su$-path in $M\times G$.

In fact, we have a much stronger fact (see \cite[Theorem~9.1]{BuW99}): \eqref{eq.accessible} implies that
$f$ is \emph{stably accessible}.
The main step is to show that if \eqref{eq.accessible} holds then 
$(x,e)$ is \emph{centrally engulfed from} $(x,e)$: there exist a continuous map 
$\Psi:Z \times [0,1] \to M\times G$ and a constant $N \ge 1$ such that
\begin{itemize}
\item $Z$ is a compact, connected, orientable $\dim E^c$-manifold with boundary;
\item each curve $\Psi(z,\cdot)$ is an $su$-path on $M\times G$ with no more than $N$ legs satisfying
$$
\Psi(z,0)=(x,e)
\text{ and }
\Psi(z,1) \in \{x\} \times G;
$$
\item $\Psi(z,1) \neq (x,e)$ for every $z\in\partial Z$, and the map
$$
\left(Z,\partial Z\right) \to \left(\{x\} \times G, \{x\} \times \left(G\setminus\{e\}\right)\right),
\quad z \mapsto \Psi(z,1)
$$
has positive degree.
\end{itemize}

Then, a degree argument ensures that the accessibility property is stable under
perturbations of the dynamical system.
The arguments in Theorem~9.1 (see also Corollary~5.3) of \cite{BuW99} remain
valid in our setting, so we refer the reader to that paper.
Then, to prove our claim it suffices to check that \eqref{eq.accessible} is a dense property among
the maps of the form \eqref{eq.affine_extension}. This is given by \cite[Proposition~2.3]{Bri75a},
whose proof can be outlined as follows (see \cite[Theorem~9.8]{BuW99}).

In view of the observations, it is no restriction to assume that $x$ is a periodic point.
Recall that the periodic points of a (transitive) Anosov diffeomorphism are dense in the
ambient manifold. Let $m=\dim E^c$ and then choose periodic points $x_1, \dots, x_m$ close
to $x$ and such that they are all in distinct orbits.
The local unstable manifold $W_{loc}^u(x)$ of $x$ intersects the local stable manifold $W^s_{loc}(x_i)$ of $x_i$ at a point $z_i^0$, and the local unstable manifold
$W_{loc}^u(x_i)$ of $x_i$ intersects the local stable manifold $W^s_{loc}(x)$ of $x$ at
a point $z_i^1$. The assumption that $x, x_1, \dots, x_m$ are all in distinct orbits
ensures that the orbits of the $z_i^0$ and $z_i^1$ are all distinct as well.
Fix a neighbourhood $U_i$ of each $z_i^0$, small enough that 
none of the periodic points $x, x_1, \dots, x_m$ is in $U_i$ and, moreover, $f^k(z_j^a)\in U_i$
if and only if $j=i$, $k=0$ and $a=0$.

Let $\sigma_i$ be the $su$-loop consisting of $4$ short legs from $x$ to $z_i^0$, to $x_i$, to $z_i^1$,
and back to $x$. It is clear that $\sigma_i$ is homotopically null.
We denote the associated holonomy map as $h_{\sigma_i,w}$, to highlight the dependence on $w$.
By construction, if $\tilde{w}:M \to G$ coincides with $w$ outside $U_i$ then
$$
h_{\sigma_j,\tilde w}(x,e)=h_{\sigma_j,w}(x,e)
$$
for every $j \neq i$. It is not difficult to find such perturbations $\tilde w$ so that the point
$h_{\sigma_j,\tilde w}(x,e)$ moves in any prescribed direction inside the fibre $\{x\} \times G \approx G$.
Thus, by modifying $w$ suitably inside each $U_1$, \dots, $U_m$, we can ensure that
$$
\left\{h_{\sigma_j,\tilde w}(x,e): i=1, \dots, m\right\}
$$
is not contained in any subgroup of $G$ with dimension less than $m$.
Since this set is contained in the holonomy group $H_{x,\tilde w}^0$,
that implies that $H_{x,\tilde w}^0=G$.

\section{Proof of Theorem~\ref{theorem_torus}}\label{s.theorem_torus}

Let $f_0:M \times \TT^2 \to M \times \TT^2$ be a $C^r$, $r \ge 2$ elliptic affine extension
of a transitive Anosov diffeomorphism $g_0:M\to M$, as defined in \eqref{eq.Mobius_extension}.
Assume that $f_0$ preserves a given measure $\mu$ in the Lebesgue class of $M \times\TT^2$.
It is clear that $f_0$ is a partially hyperbolic skew-product.
In particular (cf. Example~\ref{ex.skew_product}), every $\mu$-preserving
diffeomorphism $f$ in a $C^r$-neighbourhood belongs to $\cF^r_\mu(M,\TT^2)$ and,
thus, satisfies some of the three alternatives in Theorem~\ref{theorem_general}.

The first two alternatives correspond precisely to claims (1) and (2) in the
present Theorem~\ref{theorem_torus}; the second one will be further discussed
in Section~\ref{s.line_fields}. So, we just need to upgrade the alternative (3)
in Theorem~\ref{theorem_general} to the statement in claim (3)
of Theorem~\ref{theorem_torus}.
In what follows we assume that the two centre Lyapunov exponents of
$f:M \times \TT^2 \to M \times \TT^2$ coincide, and the centre leaves are
endowed with Riemann surface structures that vary continuously on $M\times\TT^2$
and are invariant under both the dynamics and the invariant holonomies of $f$.

\subsection{Uniformisation}\label{s.uniformisation}

Given any $\tau\in \mathbb{H}$, let $\TT_\tau^2=\CC/L(1,\tau)$ be the quotient
of the complex plane $\CC$ by the sub-lattice $L(1,\tau)$ generated by $1$
and $\tau$.
Since every centre leaf $\cF^c(q)$ is a topological torus, the corresponding
Riemann surface is a complex torus, and so it admits a Riemann surface
automorphism $\TT_{\tau(q)} \to \cF^c(q)$, for some $\tau(q)\in\HH$.
See \cite[pp. 86--90]{FaK80}.
The Riemann surfaces $\cF^c(q)$ are all conformally equivalent,
as they are mapped to one another by the stable and unstable holonomies,
which preserve the Riemann surface structure.
This implies that the different values of $\tau(p)$ all belong to the same
orbit of the modular group $\PSL(2,\ZZ)$. See~\cite[Section~2]{Hel69}.
The following more precise statement will be useful in what follows:

\begin{proposition}\label{p.uniformisation}
There exist $\tau\in \HH$ and a homeomorphism $\Psi: M \times \TT_\tau^2 \to M \times \TT^2$ whose restriction to each fibre
$\{x\} \times \TT_\tau^2$, $x \in M$ is a Riemann surface automorphism onto
a leaf $\cF^c_x$ of the centre foliation of $f$.
\end{proposition}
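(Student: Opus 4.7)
The plan is to use the measurable Riemann mapping theorem (Theorems~\ref{t.mRmt} and~\ref{t.mRmt_continuity}) to produce a continuous family of uniformising maps from the standard torus to the centre leaves, and then to invoke a rigidity argument to conclude that the modular invariant is constant.

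First, fix a leaf conjugacy $H_f:M\times\TT^2\to M\times\TT^2$ as in Example~\ref{ex.skew_product}: its restriction $h_x=H_f|_{\{x\}\times\TT^2}:\TT^2\to\cF^c_x$ is a $C^1$ diffeomorphism whose leaf derivatives vary continuously on $M\times\TT^2$. Pulling back via $h_x$ the invariant conformal structure on $\cF^c_x$ built in Section~\ref{s.invariant_structures} yields a family of Beltrami coefficients $\mu_x$ on $\TT^2=\CC/(\ZZ+i\ZZ)$. Continuity of the conformal structure (Remark~\ref{r.continuity}) together with continuity of $Dh_x$ makes $\mu_x$ depend continuously on $(x,z)$ in the sup-norm, and compactness of $M\times\TT^2$ gives $\|\mu_x\|_\infty$ uniformly bounded below $1$. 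Lifting to the universal cover produces a $(\ZZ+i\ZZ)$-invariant Beltrami coefficient $\tilde\mu_x$ on $\CC$, still continuous in $x$.

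Next, invoke Theorem~\ref{t.mRmt} to get the unique $\tilde\mu_x$-conformal homeomorphism $\phi_x:\CC\to\CC$ fixing $0$, $1$ and $\infty$; Theorem~\ref{t.mRmt_continuity} ensures continuous dependence of $\phi_x$ on $x$, uniformly on compacts. For $T\in\{T_1,T_i\}$ with $T_1(z)=z+1$ and $T_i(z)=z+i$, invariance of $\tilde\mu_x$ under $T$ makes $\phi_x\circ T$ a $\tilde\mu_x$-conformal homeomorphism, so $\phi_x\circ T\circ \phi_x^{-1}$ is a holomorphic homeomorphism of $\CC$ fixing $\infty$, hence affine, and it has no fixed point in $\CC$ (because $T$ has none), so it is a translation. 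Combined with $\phi_x(0)=0$ and $\phi_x(1)=1$, this yields $\phi_x(z+1)=\phi_x(z)+1$ and $\phi_x(z+i)=\phi_x(z)+\tau_x$, where $\tau_x=\phi_x(i)\in\HH$ (orientation-preserving). Thus $\phi_x$ descends to a conformal isomorphism $\bar\phi_x:\TT^2\to\TT_{\tau_x}^2$, and $x\mapsto\tau_x$ is continuous on $M$.

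The decisive step is to show $\tau_x$ is constant. Since $f$ is accessible, any two centre leaves $\cF^c_x,\cF^c_y$ are related by a composition of stable and unstable holonomies, which were shown in Section~\ref{s.invariant_structures} to be conformal; hence all the complex tori $\TT_{\tau_x}^2$ are mutually conformally equivalent, so $\{\tau_x:x\in M\}$ lies in a single $\PSL(2,\ZZ)$-orbit in $\HH$. Because $\PSL(2,\ZZ)$ acts properly discontinuously on $\HH$, every such orbit is discrete; as $M$ is connected (a consequence of $g_0$ being a transitive Anosov diffeomorphism), continuity of $x\mapsto\tau_x$ forces it to be constant, equal to some $\tau\in\HH$. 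The map $\Psi(x,w)=h_x(\bar\phi_x^{-1}(w))$ then satisfies the conclusion: its restriction to $\{x\}\times\TT_\tau^2$ is the conformal isomorphism $h_x\circ\bar\phi_x^{-1}$ onto $\cF^c_x$, and $\Psi$ is a continuous bijection between compact Hausdorff spaces, hence a homeomorphism. The main delicate point is the sup-norm continuity of $\mu_x$ needed to apply Theorem~\ref{t.mRmt_continuity}; once that regularity is established from Remark~\ref{r.continuity} and the continuous dependence of the leaf derivatives, the descent to the torus and the discreteness argument pinning down $\tau_x$ are both routine.
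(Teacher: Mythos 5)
Your argument is correct and follows essentially the same route as the paper's own proof: pull back the invariant conformal structure via the leaf conjugacy to get continuous Beltrami coefficients on $\TT^2$, lift to $\ZZ+i\ZZ$-periodic coefficients on $\CC$, apply the measurable Riemann mapping theorem with its continuity statement, show the normalised solution conjugates the lattice translations to translations (hence descends to $\TT^2_{\tau_x}$), and then use conformal equivalence of the leaves via holonomy plus discreteness of the modular orbit and continuity of $x\mapsto\tau_x$ to pin down a single $\tau$. The only cosmetic difference is in showing the lattice deck transformations conjugate to translations: you observe directly that a fixed-point-free affine automorphism must be a translation, whereas the paper reaches the same conclusion by iterating and contradicting injectivity of $w^x$.
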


\begin{proof}
The first step is to reduce the problem to the case of the unperturbed map $f_0$.
Let $H_f:M\times\TT^2 \to M \times\TT^2$ be a leaf conjugacy as mentioned
in Example~\ref{ex.skew_product}.
Thus $H_f$ is a homeomorphism that  maps each centre leaf $\{x\}\times\TT^2$ of $f_0$
onto a centre leaf $\cF^c_x$ of $f$.
Moreover, each restriction $H^x_f:\{x\}\times\TT^2 \to \cF^c_x$ of the leaf conjugacy
is a $C^1$ diffeomorphism, and the (leaf) derivatives vary continuously on $M \times \TT^2$.
Endow each $\{x\} \times \TT^2$ with the Riemann surface structure that turns $H^x_f$ into a Riemann
surface automorphism. 

These Riemann surfaces are all conformally equivalent and,
also by construction, their conformal structures vary continuously on $M \times \TT^2$.
Now to prove the proposition we only have to find a homeomorphism
$$
\Phi: M \times \TT_\tau^2 \to M \times \TT^2, \quad (x,v) \mapsto (x,\Phi^x(v))
$$
such that each $\Phi^x:\TT_\tau^2 \to \TT^2$ is a Riemann surface automorphism:
then it suffices to take $\Psi = H_f \circ \Phi$.

Let $dz=dx+idy$ and $d\bar{z}=dx-idy$ be the canonical $1$-forms on the torus $\TT^2=\CC/(\ZZ+i\ZZ)$
inherited from the complex plane $\CC$ through the canonical projection $\CC\to\TT^2$.
Then let $\mu^xd\bar{z}/dz$ be the \emph{Beltrami differential} of the Riemann
surface structure  on $\{x\}\times\TT^2$ discussed in the previous paragraphs.
In other words, $\mu^x:\TT^2 \to \DD$ is such that the metric $ds = |dz+\mu^x(v) d\bar{z}|$
belongs to the conformal structure at each point $(x,v)\in\{x\}\times\TT^2$.
The fact that these conformal structures vary continuously on $M \times \TT^2$
ensures that
$$
\mu: M \times \TT^2 \to \DD, \quad (x,v) \mapsto \mu^x(v)
$$
is a continuous function. By compactness, it follows that $k=\sup|\mu|$
is strictly smaller than $1$.

Via the canonical projection $\CC\to\TT^2$, we may view each $\mu^x$ as
a continuous $\ZZ+i\ZZ$-periodic function on the complex plane, with
$$
\sup|\mu^x| \le k < 1 \text{ for every } x \in M.
$$
Then, by the measurable Riemann mapping theorem (Theorem~\ref{t.mRmt}),
there exists a unique homeomorphism $w^x:\bar\CC\to\bar\CC$ that fixes
$0$, $1$, and $\infty$ and satisfies the \emph{Beltrami equation}
\begin{equation}\label{eq.Beltrami2}
\partial_{\bar z} w^x = \mu^x \partial_z w^x,
\end{equation}
which means that $w^x$ maps the conformal structure defined by $\mu^x$ to the standard conformal
structure on $\CC$. Any other solution of \eqref{eq.Beltrami2} is obtained from $w^x$ through
post-composition with a Möbius automorphism of the complex plane.
Moreover, using Theorem~\ref{t.mRmt_continuity}, the homeomorphism $w^x$
depends continuously on the function $\mu^x$ in the sense that for any
compact set $K\subset\CC$ there exists $C(K)>0$ such that 
\begin{equation}\label{eq.continuous}
\sup_{z\in K}|w^x(z)-w^y(z)| \le C(K) \sup |\mu^x-\mu^y|
\text{ for any $x, y \in M$.}
\end{equation}
Consequently, $w^x$ depends continuously on $x \in M$, uniformly on each compact subset of $\CC$.

We claim that 
\begin{equation}\label{eq.commutations}
w^x(z+1) = w^x(z)+1 \text{ and } w^x(z+i) = w^x(z)+\tau(x) \text{ for every } z \in \CC,
\end{equation}
where $\tau(x) = w^x(i)$. Keep in mind that $w^x(1)=1$.
Indeed, since $\mu^x$ is $\ZZ+i\ZZ$-periodic, both $z \mapsto w^x(z+1)$ and 
$z \mapsto w^x(z+i)$ are solutions of the Beltrami equation that fix $\infty$. 
Thus, there exist Möbius automorphisms $M_1(z) = a_1 z + b_1$ and 
$M_i(z) = a_i z + b_i$ such that 
$$
w^x(z+1) = M_1 \circ w^x(z) \text{ and } w^x(z+i) = M_i \circ w^x(z)
$$
for every $z$. It is clear that $b_1=w^x(1)=1$ and $b_i=w^x(i)=\tau(x)$.
If $a_1 \neq 1$ then $M_1$ has a finite fixed point $p_1=b_1/(1-a_1)$. 
Let $z_1 = (w^x)^{-1}(p_1)$. Then 
$$
w^x(z_1 + n) = M_1^n(w^x(z_1)) = p_1 \text{ for every } n.
$$
This contradicts the fact that $w^x$ is injective, and that contradiction
proves that $a_1=1$. The same argument proves that $a_i=1 $ and so the
claim \eqref{eq.commutations} is proved.

Now, \eqref{eq.commutations} ensures that $w^x(\ZZ+i\ZZ) = L(1,\tau(x))$,
and so $w^x$ descends to a homeomorphism 
$$
W^x: \TT^2 \to \TT_{\tau(x)}.
$$
Since the conformal structures defined by the $\mu^x$ are all conformally equivalent,
as pointed out before, we have that the different values of $\tau(x)$ all belong to
the same orbit of the modular group $\SL(2,\ZZ)$. See \cite[Section~2]{Hel69}.
It is also clear from the definition $\tau(x) = w^x(i)$ that $\tau(x)$ depends continuously
on $x$. Since the modular group is discrete, it follows that the function $x \mapsto \tau(x)$ is constant. Denote by $\tau$ that constant.

Now just take $\Phi^x:\TT_\tau^2 \to \TT^2$ to be the inverse of $W^x$, for every $x \in M$.
\end{proof}

\subsection{Translation structures}\label{s.translation_structures}

The Riemann surface $\TT_\tau^2$ also carries a canonical structure of a translation surface,
inherited from the complex plane $\CC$. In what follows we consider on each centre leaf $\cF^c_x$ the
translation structure transported from $\{x\} \times \TT_\tau^2$ through the uniformisation map $\Psi$.

\begin{lemma}~\label{holonomytranslation}
The stable and unstable holonomies of $f$ are translations with respect to the translation structures
on the centre leaves.
\end{lemma}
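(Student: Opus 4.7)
I would work in the uniformising coordinates provided by $\Psi$. Write $\Psi_x = \Psi(x,\cdot) : \TT_\tau^2 \to \cF^c_x$, which is a Riemann surface isomorphism by Proposition~\ref{p.uniformisation}. For $x,y \in M$ with $\cF^c_x,\cF^c_y$ on the same centre-stable leaf, conjugate the stable holonomy to $\tilde h^s_{x,y} = \Psi_y^{-1} \circ h^s_{x,y} \circ \Psi_x : \TT_\tau^2 \to \TT_\tau^2$. The task is to show each $\tilde h^s_{x,y}$ is a translation; the unstable case is entirely symmetric.

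\textbf{Classification.} Since $h^s_{x,y}$ preserves the Riemann surface structure on the centre leaves (established in the proof of Theorem~\ref{theorem_general}), $\tilde h^s_{x,y}$ is a biholomorphism of $\TT_\tau^2$. Every biholomorphism of a complex torus lifts through the covering map $\CC \to \TT_\tau^2$ to an affine transformation $z \mapsto a^s_{x,y}\, z + b^s_{x,y}$ of $\CC$, whose linear part $a^s_{x,y}$ lies in the finite group of units of the lattice $L(1,\tau)$ — namely $\{\pm 1\}$ generically, cyclic of order $4$ when $\tau = i$, or cyclic of order $6$ when $\tau = e^{i\pi/3}$. Translations correspond exactly to $a^s_{x,y} = 1$.

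\textbf{Local triviality and globalisation.} To show $a^s_{x,y} = 1$ for $y$ near $x$ along the stable leaf of $g$, I argue by contradiction. By finiteness of the unit group and compactness of $\TT_\tau^2$, one can extract a sequence $y_n \to x$ with $a^s_{x,y_n} = a \neq 1$ constant and $b^s_{x,y_n} \to b \in \TT_\tau^2$. For any fixed $v \in \TT_\tau^2$, setting $p = \Psi_x(v)$, continuity of $\Psi$ yields
\[
h^s_{x,y_n}(p) = \Psi_{y_n}\!\left(a v + b^s_{x,y_n}\right) \longrightarrow \Psi_x(a v + b).
\]
On the other hand, since $h^s_{x,y}(p) = \cF^s(p) \cap \cF^c_y$ depends continuously on $y$ and reduces to $p$ at $y = x$, we also have $h^s_{x,y_n}(p) \to \Psi_x(v)$. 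Injectivity of $\Psi_x$ forces $av + b = v$ in $\TT_\tau^2$ for every $v$, making $(1-a)v$ independent of $v$ — impossible unless $a = 1$. For arbitrary $x,y$ on the same stable leaf of $g$, pick a chain $x = x_0, x_1, \ldots, x_n = y$ whose consecutive points are close enough to apply the local result. The cocycle identity $h^s_{x,y} = h^s_{x_{n-1},x_n} \circ \cdots \circ h^s_{x_0,x_1}$ then exhibits $\tilde h^s_{x,y}$ as a composition of translations, hence a translation, since translations form a subgroup of the affine group.

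\textbf{Main obstacle.} The heart of the argument is the discreteness-versus-continuity dichotomy of the previous paragraph: the linear part $a^s_{x,y}$ is constrained to lie in a discrete finite set by the complex torus structure, while continuity of the holonomy (together with $h^s_{x,x} = \Id$) and of $\Psi$ forces it close to $1$. Injectivity of $\Psi_x$ is the technical point that converts the pointwise equality of images in $\cF^c_x$ into the rigid algebraic identity $(1-a)v \equiv -b$, from which the contradiction $a = 1$ follows. Once local triviality is secured, globalisation via the cocycle identity is automatic.
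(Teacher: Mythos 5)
Your proof follows the same strategy as the paper: conjugate the holonomy by the uniformisation $\Psi$, lift to a complex-affine automorphism of $\CC$, and play the discreteness of the lattice's unit group against the continuity of the holonomy (which degenerates to the identity as the two leaves come together) to force the linear part to equal $1$. The only cosmetic difference is the route to constancy — the paper notes directly that the linear part $\alpha_{p,q}$ varies continuously in $(p,q)$ and takes values in a discrete set (citing \cite[Theorem~7]{Hel69}), hence is globally constant, and then evaluates the limit $d(p,q)\to 0$; you instead run a sequence-based contradiction to get the local result and globalise via the cocycle identity — but the underlying ideas coincide.
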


\begin{proof}
For any $p$ and $q$ in the same stable leaf of $f$, let $h^s_{p,q}:\cF^c(p) \to \cF^c(q)$ be
the stable holonomy and
$$
\mh^s_{p,q}: \TT_\tau^2 \to \TT_\tau^2, \quad \mh^s_{p,q} = \Psi_q^{-1} \circ h^s_{p,q} \circ \Psi_p
$$
be its expression under the uniformisation provided by Proposition~\ref{p.uniformisation}.
This is a conformal homeomorphism of $\TT_\tau^2$ and so it lifts to a conformal automorphism $\hat\mh^s_{p,q}:\CC\to\CC$,
that is, a map of the form
$$
\hat\mh^s_{p,q}(z)=\alpha_{p,q}z+\beta_{p,q}
$$
for some complex numbers $\alpha_{p,q}$ and $\beta_{p,q}$.  
Since the uniformisation map $\Psi$ is a homeomorphism, $\mh^s_{p,q}(z)$ is a continuous function of
$p$, $q$ and $z$. That implies that $\alpha_{p,q}$ depends continuously on $p$ and $q$.
According to \cite[Theorem~7]{Hel69}, the set of all possible values of $\alpha_{p,q}$ is discrete.
This implies that $\alpha_{p,q}$ is actually independent of $p$ and $q$.
On the other hand, as $d(p,q) \to 0$ the stable holonomy map $\mh^s_{p,q}$ converges to
the identity, and then so does its lift $\hat\mh_{p,q}^s$.
This means that $\alpha_{p,q}\to 1$ as $d(p,q)\to 0$, and so $\alpha_{p,q}=1$ for every
$p$ and $q$. This proves that the stable holonomy map is a translation
for any $p$ and $q$ in the same stable leaf.

The same argument applies to the unstable holonomies.
\end{proof} 

Let $g$ be the map induced by $f$ on the space of centre leaves, which we may view as the homeomorphism
$g: M \to M$ defined by
\begin{equation}\label{eq.defg}
f(\cF^c_x) = \cF^c_{g(x)}.
\end{equation}
Since the leaf conjugacy $H_f:M \times \TT^2 \to M \times \TT^2$ maps centre leaves
of $f_0$ to centre leaves of $f$, it descends to a homeomorphism $h_f:M\to M$.
The invariance property \eqref{eq.leaf_conjugacy} means that this $h_f$
conjugates $g$ to $g_0$:
\begin{equation}\label{eq.conjugahyp}
g \circ h_f = h_f \circ g_0.
\end{equation}

Recall that the uniformisation $\Psi$ in Proposition~\ref{p.uniformisation} maps each fibre $\{x\}\times \TT_\tau^2$ to the
centre leaf $\cF^c_x$. Thus, 
$$
\Psi^{-1} \circ f \circ \Psi: M \times \TT_\tau^2 \to M \times \TT_\tau^2,
\quad
(x,z) \mapsto (g(x), \mf_x(z))
$$
where $\mf_x:\TT_\tau^2 \to \TT_\tau^2$ is given by
$$
\mf_x = (\Psi \mid_{\{g(x)\} \times \TT_\tau^2})^{-1} \circ f \circ \left( \Psi \mid_{\{x\} \times \TT_\tau^2}\right).
$$
This is an invertible conformal map, and so its lift $\phi_x:\CC\to\CC$ is a conformal automorphism of the plane.
It follows that $\phi_x(z) = a_xz+b_x$ for some $a_x, b_x \in \CC$.
In particular, the Jacobian of $\phi_x$ relative to the standard area form on $\CC$ is constant equal to $|a_x|^2$,
and then so is the Jacobian of $\mf_x$ relative to the standard area form on $\TT_\tau^2$
(inherited from the complex plane). Since the torus has finite area, this implies that $|a_x|=1$.

Next, consider the homeomorphism $\Lambda = h_f \times \lambda:M \times \TT^2 \to M \times \TT_\tau^2$
where $\lambda:\TT^2 \to \TT_\tau^2$ is the map induced by the $\RR$-linear isomorphism $\CC\to\CC$
that fixes $1$ and sends $i$ to $\tau$ (and which we also denote as $\lambda$). Then, denoting
$Y = \Psi \circ \Lambda$,
\begin{equation}\label{eq.conjug1}
Y^{-1} \circ f \circ Y:M\times \TT^2 \to M \times \TT^2,
\quad 
(x,v) \mapsto (g_0(x),\lambda^{-1}(a_x\lambda(v)+b_x)).
\end{equation}
The fact that the affine map $\CC\to\CC$, $v \mapsto \lambda^{-1}(a_x\lambda(v)+b_x)$ descends to a homeomorphism
of the torus means that the linear part
$$
L_x:\CC \to \CC, \quad u \mapsto \lambda^{-1}(a_x\lambda(u))
$$
preserves the lattice $\ZZ^2$, and that means that $L_x \in \SL(2,\ZZ)$. 
Since the latter is a discrete group, and the map $x \mapsto L_x$ is continuous, 
it follows that $x \mapsto L_x$ is actually constant, and then so is $x \mapsto a_x$.
The spectrum of $L=L_x$ consists of $a=a_x$ and its conjugate, and so $L$ is elliptic.
By Remark~\ref{r.finite_order}, it follows that $L^n = \Id$ or, equivalently, $a^n = 1$
for some $n \in \{1, 2, 3, 4, 6\}$. Finally, we may write \eqref{eq.conjug1} as
\begin{equation}\label{eq.conjug2}
Y^{-1} \circ f \circ Y:M\times \TT^2 \to M \times \TT^2,
\quad 
(x,v) \mapsto (g_0(x),L v + w'(x))
\end{equation}
for some continuous function $w':M\to \TT^2$.
Now, to complete the proof of claim (3) in the theorem we only need
to explain why $L$ may be taken equal to $L_0$.

On the one hand, it is clear that the restriction 
\begin{equation}\label{eq.f0}
\{x\} \times \TT^2 \to \{g_0(x)\} \times \TT^2, \quad 
(x,v) \mapsto (g_0(x), L_0v + w_0(x))
\end{equation}
 of $f_0$ to each centre leaf is isotopic to the linear automorphism $L_0 : \TT^2 \to \TT^2$.
The leaves $\cF^c_x$ of the centre foliation of $f$ are uniformly close to the vertical fibres
$\{x\} \times \TT^2$, and so each may be identified with $\TT^2$ via the horizontal projection
$(x,v) \mapsto v$. In this way, every restriction
$$
\left(f \mid \cF^c_x\right):\cF^c_x \to \cF^c_{g(x)}
$$
may be viewed as a map $f_x: \TT^2 \to \TT^2$. By construction, these maps are uniformly close
to \eqref{eq.f0}, and so they are all isotopic to $L_0:\TT^2 \to \TT^2$. In particular,
the action of every $f_x$ on the homology of the torus is given by $L_0$.

On the other hand, \eqref{eq.conjug2} gives that $f_x$ is topologically
conjugate to a map of the form $v \mapsto L v + w(x)$,
and so their actions on the homology of the torus are linearly conjugate. 
The latter map is isotopic to $L: \TT^2 \to \TT^2$, and so its action on the homology of the torus
is given by $L$. This shows that $L$ and $L_0$ are linearly conjugate, that is, there exists
$P \in \SL(2,\ZZ)$ such that $L = P^{-1} L_0 P$. Then, denoting $Z=(\Id \times P) \circ Y$,
\begin{equation}\label{eq.conjug3}
Z^{-1} \circ f \circ Z
: M\times \TT^2 \to M \times \TT^2,
\quad 
(x,v) \mapsto (g_0(x), L_0 v + w(x)),
\end{equation}
with $w(x) = P^{-1} w'(x)$.
This finishes the proof of Theorem~\ref{theorem_torus}.

\section{Invariant line fields}\label{s.line_fields}

We begin by proving Corollary~\ref{corollaryD}. Then we present a simple
example where invariant line fields as in alternative (2) in
Theorem~\ref{theorem_torus} do occur.

Let $x_0 \in M$ be a fixed point of $g_0$. The transformation
$\TT^2 \to \TT^2,\  v \mapsto L_0(v) + w_0(x_0)$ lifts to a map
$$
\RR^2 \to \RR^2, \quad v \mapsto L_0(v) + W_0
$$
where $W_0\in\RR^2$ is any vector that projects to $w_0(x_0)$ under the
covering map $\RR^2\to\TT^2$.
Our assumptions ensure that $1$ is not an eigenvalue of $L_0$, that is,
the linear map $\Id-L$ is invertible.
Let $v_0$ be the projection to the torus $\TT^2$ of the 
vector $(\Id-L_0)^{-1}(W_0)$. Then $p_0=(x_0,v_0)$ is a fixed point of $f_0$,
and it is easy to see that this fixed point is \emph{simple}: the spectrum of
$$
Df_0(p_0)= \left(\begin{array}{cc}
Dg_0(x_0) & 0\\
Dw_0(x_0) & L_0
\end{array}\right)
$$
is the union of the spectra of $Dg_0(x_0)$ and $L_0$, and thus does not contain $1$. 
Consequently, every diffeomorphism $f$ in a neighbourhood of $f_0$ has a unique fixed point $p$ close to $p_0$,
and this fixed point is still simple.
We refer to $p$ as the \emph{continuation} of the fixed point $p_0$ of $f$. 

Let $f \in \cU_T$ be an accessible $\mu$-preserving diffeomorphism, and suppose that its centre Lyapunov exponents coincide.
Let $m$ be any $\PP(\Dcf)$-invariant measure $m$ on $\PP(E^c)$ projecting down to $\mu$ on $M\times\TT^2$.
By the Invariance Principle (Theorem~\ref{t.invariance_principle}), $m$ 
admits a continuous disintegration $\{m_q: q \in M \times \TT^2\}$
invariant under the dynamics and under the stable and unstable holonomies,
that is, satisfying \eqref{eq.su-invariance} and \eqref{eq.invariance}.

\begin{lemma}\label{l.noatoms}
Assuming $f$ is close enough to $f_0$, the conditional probabilities $m_q$
can have no atoms of mass greater than or equal to $1/2$.
\end{lemma}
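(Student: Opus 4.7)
The plan is to pull the statement back to the continuation fixed point $p$ of $p_0$ and exploit that $A := D^c f(p)$ is $C^0$-close to the finite-order elliptic matrix $L_0$. Since $f$ is accessible, Theorem~\ref{t.invariance_principle}(b) yields that the disintegration $\{m_q\}$ depends weak-$*$ continuously on $q$, so the maximal atomic mass $q \mapsto M(q) := \sup_{\xi} m_q(\{\xi\})$ is upper semi-continuous. The set $\{q : M(q) \ge 1/2\}$ is closed and, by the invariance relations \eqref{eq.su-invariance} and \eqref{eq.invariance}, both $f$-invariant and saturated by the stable and unstable foliations; accessibility therefore forces it to be empty or all of $M \times \TT^2$. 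Hence it suffices to prove $M(p) < 1/2$.

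At $p$, the measure $m_p$ is $\PP A$-invariant. An atom of mass strictly greater than $1/2$ is unique and must be $\PP A$-fixed, forcing $A$ to possess a real eigenvector. A pair of atoms of mass exactly $1/2$ is either fixed pointwise by $\PP A$ (same conclusion) or swapped by $\PP A$; in the swapped case $(\PP A)^2$ fixes both atoms, so $A^2$ has two real eigenvectors and, in particular, real eigenvalues. Since the eigenvalues $e^{\pm 2\pi i/n}$ of $L_0$ are non-real for $n \in \{3,4,6\}$ and non-realness is $C^0$-open, shrinking $\cU_T$ excludes real eigenvectors of $A$ and disposes of the two ``fixed'' cases. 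For the swapped pair I would look at $A^2$: when $n = 3$ or $n = 6$, the eigenvalues of $L_0^2$ (namely $e^{\pm 4\pi i/3}$ and $e^{\pm 2\pi i/3}$ respectively) are still bounded away from the real axis, and the same continuity argument excludes real eigenvalues of $A^2$.

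The delicate case, and where I expect the main technical difficulty, is $n = 4$: here $L_0^2 = -\Id$ is a real scalar, so eigenvalues of $A^2$ close to $-1$ can genuinely be real. A short calculation shows that the swapped-pair configuration then reduces, via Cayley--Hamilton, to $A$ having purely imaginary eigenvalues $\pm is$ with $A^2 = -s^2 \Id$, which is a codimension-one condition that $C^0$-closeness alone does not exclude. To close the gap I would propagate the hypothetical atom pair by holonomies to a continuous, $f$-invariant and holonomy-invariant pair of transverse line fields on $E^c$, and then exploit the richness of the holonomy-loop action at $p$ guaranteed by accessibility, together with the leaf conjugacy $H_f$ of Example~\ref{ex.skew_product}, to derive a contradiction with $f \in \cU_T$.
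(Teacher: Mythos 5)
Your argument for $L_0$ of order $3$ or $6$ is essentially the paper's own, phrased in terms of eigenvalues of $L_0^k$ instead of periodic points of $\PP(L_0)$; these are equivalent, since a period-$k$ point of $\PP(L_0)$ is exactly a real eigendirection of $L_0^k$. The paper's proof rests on the assertion that $\PP(L_0)$ ``has no periodic points with period less than $3$,'' and you are right that this fails when $L_0$ has order $4$: there $L_0^2 = -\Id$, so $\PP(L_0)^2 = \Id$ and \emph{every} line in the centre fibre is periodic of period $2$. You have thus located a genuine gap in the published argument, which silently treats all three orders $3,4,6$ alike.

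However, the patch you sketch for $n = 4$ --- propagate the atom pair by holonomy to an invariant pair of transverse line fields and derive a contradiction with $f \in \cU_T$ --- cannot succeed, because the lemma as stated is actually \emph{false} when $L_0$ has order $4$, already for an accessible $f_0$ itself. For $f_0$ the stable and unstable holonomies are translations of $\TT^2$ (Lemma~\ref{l.left-translations}), so under the natural trivialisation $E^c \cong (M\times\TT^2)\times\RR^2$ their derivatives are the identity, and the projectivised holonomies $H^s, H^u$ act trivially on $\PP(E^c)$. Pick any $\PP(L_0)$-orbit $\{\xi_1,\xi_2\}$ of period $2$ and set $m_q \equiv \tfrac12(\delta_{\xi_1}+\delta_{\xi_2})$ for all $q$: this is a continuous, bi-invariant disintegration of the $\PP(\Dcf_0)$-invariant probability $m = \mu \times \tfrac12(\delta_{\xi_1}+\delta_{\xi_2})$, which projects to $\mu$ and whose conditionals have two atoms of mass exactly $1/2$. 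So no contradiction is available from $f_0 \in \cU_T$. This does not contradict Corollary~\ref{corollaryD} itself, since $f_0$ is trivially an $L_0$-affine extension, but it does show that Lemma~\ref{l.noatoms} needs the order of $L_0$ to be $3$ or $6$; for order $4$, Corollary~\ref{corollaryD} would require an argument that does not pass through non-atomicity of the conditionals.
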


\begin{proof}
Clearly, the number of atoms of each $m_q$ with mass greater than or equal to $1/2$ is at most $2$.
Moreover, that number does not depend on $q$, because the disintegration is holonomy invariant and $f$ is
assumed to be accessible.
Let $p$ be the continuation of the fixed point $p_0$ for $f$.
Since,
by \eqref{eq.invariance}, the probability measure $m_p$ is invariant under $\PP((\Dcf)_p)$,
any atoms with mass greater than or equal to $1/2$ must be periodic points of period $1$ or $2$.
However, such periodic points cannot exist if $f$ is close to $f_0$,
because then $\PP((\Dcf)_p)$ is close to $\PP((\Dcf_0)_{p_0})=\PP(L_0)$
which, by assumption, has no periodic points with period less than $3$.
\end{proof} 

This means that the alternative (2) in Theorem~\ref{theorem_torus} cannot
occur in the present setting. Thus $f$ must be satisfy alternative (3),
that is, it must be topologically conjugate to an $L_0$-affine extension
of $g_0$. This proves Corollary~\ref{corollaryD}.

On the other hand, the alternative (2) may occur for volume-preserving,
and even symplectic,  diffeomorphisms arbitrarily close to $f_0$ when $L_0=\Id$ :

\begin{example}\label{ex.linefield}
Consider $f_0:M\times\TT^2 \to M\times\TT^2$ given by $f_0(x,v)=(g_0(x),v)$.
Assume that $g$ preserves some symplectic form $\omega_M$ on $M$ and let $\omega_S$
be the standard area form on $\TT^2$.
Then $f_0$ preserves the symplectic form $\omega=\omega_M \times \omega_S$
on $M\times\TT^2$. By Theorem~\ref{t.accessibility}, for any $r\ge 2$ there
exist functions $w:M\to\TT^2$ arbitrarily $C^r$-close to zero such that
the diffeomorphism
$$
f_w:M\times\TT^2 \to M \times\TT^2, \quad f_w(x,v) = (g(x),v+w(x))
$$
is stably accessible. Observe that $f_w$ is $\omega$-symplectic.
Next, define 
$$
f:M\times\TT^2 \to M \times \TT^2,
\quad f\left(x,(v_1,v_2)\right) = \left(g(x),w(x)+(v_1+\alpha(v_2),v_2)\right),
$$
where $\alpha:\SS^1\to\RR$ is a smooth function. Assuming that $\alpha$
is $C^r$-close to zero, $f$ is $C^r$ close to $f_w$ and, hence, it is accessible.
It is clear that $f$ can be made arbitrarily close to $f_0$ by picking both $\alpha$ and $w$
sufficiently small. In particular, $f$ is partially hyperbolic, centre-bunched and
dynamically coherent. Moreover, $f$ is itself a skew-product,
and the vertical fibration $\{x\} \times \TT^2$ is the centre foliation.
It is also clear that $f$ is $\omega$-symplectic.
By construction, the centre derivative of $f$ is idempotent:
\begin{equation}\label{eq.idempotent}
\Dcf_{x,v} = \left(\begin{array}{cc} 1 & \alpha'(v_2)\\ 0 & 1\end{array} \right).
\end{equation}
In particular, the horizontal line bundle $H_{x,v} \equiv (1,0)$ is invariant under $\Dcf$.
Observe that $m = \volume \times \delta_{H_{x,v}}$ is a $\PP(\Dcf)$-invariant probability
measure on $\PP(E^c)$ that projects down to the volume measure on $M\times\TT^2$.
\end{example}

\section{Proof of Theorem~\ref{theorem_sphere}}\label{s.theorem_sphere}

Take $f_0:M \times \SS^2 \to M \times \SS^2$ to be a $C^r$, $r\ge 2$ Möbius extension of a
transitive Anosov diffeomorphism $g_0:M\to M$, as defined in \eqref{eq.Mobius_extension}.
Assume that $f_0$ preserves a given measure $\mu$ in the Lebesgue class of $M \times\SS^2$.
It is clear that $f_0$ is a partially hyperbolic skew-product.
In particular (cf. Example~\ref{ex.skew_product}), every $\mu$-preserving
diffeomorphism in a $C^r$-neighbourhood belongs to $\cF^r_\mu(M,\SS^2)$
and, thus, satisfies some of the three alternatives
in Theorem~\ref{theorem_general}.

The first alternative corresponds precisely to the claim (1) in the present Theorem~\ref{theorem_sphere}.
The second one is excluded here, because the sphere $\SS^2$ supports neither continuous line fields nor
continuous pairs of line fields. In the third one, the centre leaves are endowed with continuous Riemann
surface structures invariant under $f$ and under the stable and unstable holonomies.
We are left to checking that this yields the claim (2) of Theorem~\ref{theorem_sphere}.

\begin{proposition}\label{p.uniformisation2}
There exists a homeomorphism $\Psi: M \times \SS^2 \to M \times \SS^2$ whose restriction to each
fibre $\{x\} \times \SS^2$, $x \in M$ is a Riemann surface automorphism onto the centre leaf
$\cF^c_x$ of $f$.
\end{proposition}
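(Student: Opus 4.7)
The plan is to follow closely the scheme of Proposition~\ref{p.uniformisation}, exploiting the crucial simplification that the Riemann sphere has no moduli: by the uniformisation theorem every Riemann surface structure on $\SS^2$ is biholomorphic to the standard one, so no analogue of the modular parameter $\tau$ appears, and no periodicity conditions need to be imposed.

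First I would use the leaf conjugacy $H_f : M \times \SS^2 \to M \times \SS^2$ furnished by Example~\ref{ex.skew_product} to pull the invariant conformal structure on each centre leaf $\cF^c_x$ back to the fibre $\{x\}\times \SS^2$. Since $H_f$ is $C^1$ along each leaf with continuously varying leaf derivatives, and since the invariant conformal structure on the $\cF^c_x$ constructed in Section~\ref{s.invariant_structures} varies continuously, this endows each vertical sphere with a Riemann surface structure depending continuously on $x \in M$. These structures are all biholomorphic to the standard $\SS^2$ by uniformisation.

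Next, via the stereographic projection $P: \SS^2 \to \CC \cup \{\infty\}$, I would encode each such structure by a Beltrami coefficient $\mu^x:\CC\to\DD$, jointly continuous in $(x,z)$ and with a continuous extension at infinity. Compactness gives $k = \sup_{x,z}|\mu^x(z)| < 1$. Applying Theorem~\ref{t.mRmt} to each $\mu^x$ yields a unique $\mu^x$-conformal homeomorphism $w^x:\CC \to \CC$ fixing $0$, $1$, $\infty$. The three-point normalisation is by itself enough for uniqueness — in contrast to the torus case, no lattice commutation relations need to be verified — and $w^x$ extends to a homeomorphism of $\CC\cup\{\infty\}$ descending through $P$ to a homeomorphism $W^x:\SS^2\to\SS^2$ which is a Riemann surface isomorphism from standard $\SS^2$ onto $\{x\}\times \SS^2$ equipped with its pulled-back structure. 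Theorem~\ref{t.mRmt_continuity} ensures that $W^x$ depends continuously on $x$ uniformly on compact subsets of $\CC$.

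Finally I would set $\Psi(x,v) = H_f(x,W^x(v))$. On the fibre $\{x\}\times\SS^2$ this is the composition of $W^x$ with the restriction $H_f^x$ of the leaf conjugacy, the latter being by construction a Riemann surface isomorphism from $\{x\}\times \SS^2$ (with the pulled-back structure) onto $\cF^c_x$ (with the invariant structure); hence the restriction of $\Psi$ to $\{x\}\times \SS^2$ is a Riemann surface automorphism onto $\cF^c_x$, as required. Continuity of $H_f$ and of the family $\{W^x\}$ makes $\Psi$ a homeomorphism of $M\times \SS^2$. The main technical point, and the only real obstacle, is the uniform control at the point at infinity: one must verify that the quasiconformal solutions $w^x$ behave well near the north pole uniformly in $x$ so that they genuinely define a continuous family of sphere homeomorphisms. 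This, however, is a standard feature of the measurable Riemann mapping theorem on $\SS^2$ (see~\cite{AhB60}) and raises no difficulties beyond those already encountered in the torus case.
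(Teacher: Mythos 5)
Your plan follows the paper's proof step for step: pull the invariant conformal structure back to the vertical spheres via the leaf conjugacy $H_f$, encode it by a continuous Beltrami coefficient $\mu^x$ with $\sup|\mu^x|<1$, solve the Beltrami equation through Theorem~\ref{t.mRmt} with the three-point normalisation, invoke Theorem~\ref{t.mRmt_continuity} for continuity in $x$, and compose with $H_f$. You also correctly observe that the genus-zero case dispenses with the torus-specific periodicity bookkeeping of \eqref{eq.commutations} and the discreteness-of-the-modular-orbit argument.

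There is, however, a direction error in the way you assemble $\Psi$. With the paper's convention (Section~3.2), a $\mu^x$-conformal homeomorphism $w^x$ satisfies $\partial_{\bar z}w^x=\mu^x\,\partial_z w^x$, which means $w^x$ carries the $\mu^x$-structure on its \emph{domain} to the standard structure on its \emph{range}; so $W^x$ is a Riemann surface isomorphism from $\{x\}\times\SS^2$ endowed with the pulled-back structure onto the standard sphere, not the reverse, as you assert. As a result, in $\Psi(x,v)=H_f\bigl(x,W^x(v)\bigr)$ the inner map lands in the standard structure while $H_f^x$ expects its source to carry the pulled-back $\mu^x$-structure, so the fibre restriction of your $\Psi$ is not a Riemann surface isomorphism. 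The fix is to take $\Psi=H_f\circ W^{-1}$, i.e.\ $\Psi(x,v)=H_f\bigl(x,(w^x)^{-1}(v)\bigr)$, which is exactly what the paper writes. Apart from this orientation slip, the argument is sound and matches the paper's; your final remark on uniform control at infinity is legitimate and is implicitly handled by the sphere version of Theorems~\ref{t.mRmt} and~\ref{t.mRmt_continuity}.
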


\begin{proof}
Let $H_f:M\times\SS^2 \to M \times\SS^2$ be a leaf conjugacy as mentioned in
Example~\ref{ex.skew_product}.
Thus $H_f$ is a homeomorphism that  maps each centre leaf $\{x\}\times\SS^2$ of $f_0$ onto
a centre leaf $\cF^c_x$ of $f$.
Moreover, each restriction $H^x_f:\{x\}\times\SS^2 \to \cF^c_x$ of the leaf conjugacy is a $C^1$
diffeomorphism, and the leaf derivatives vary continuously on $M \times \SS^2$.
Endow $\{x\} \times \SS^2$ with the Riemann surface structure that turns $H^x_f$ into a Riemann
surface automorphism. This structure may be represented by a function $\mu^x:\SS^2 \to \DD$,
such that the metric $ds = |dz+\mu^x(v) d\bar{z}|$ belongs to the conformal structure at each
point $(x,v)\in\{x\}\times\SS^2$. The fact that these conformal structures vary continuously
on $M \times \SS^2$ ensures that the function
$$
\mu: M \times \SS^2 \to \DD, \quad (x,v) \mapsto \mu^x(v)
$$
is continuous.

By the measurable Riemannian mapping theorem (Theorem~\ref{t.mRmt}),
for each $x \in M$ there is a unique homeomorphism
$w^x:\bar\SS^2\to\bar\SS^2$ that fixes $0$, $1$, and $\infty$ and
satisfies the Beltrami equation
\begin{equation}\label{eq.Beltrami3}
\partial_{\bar z} w^x = \mu^x \partial_z w^x,
\end{equation}
which means that $w^x$ maps the conformal structure defined on $\SS^2$ by $\mu^x$ to the standard
conformal structure on $\SS^2$. Any other solution of \eqref{eq.Beltrami3} is obtained from $w^x$
through post-composition with a Möbius automorphism of the sphere.
Moreover, by Theorem~\ref{t.mRmt_continuity}, $w^x$ depends continuously on
the function $\mu^x$, uniformly on $\SS^2$.
Consequently, $w^x$ depends continuously on $x \in M$, uniformly on the sphere, and so the map
$$
W:M \times \SS^2 \to M \times \SS^2, \quad (x,v) \mapsto (x,w^x(v))
$$
is a homeomorphism.
To complete the proof, define $\Psi = H_f \circ W^{-1}:M\times \SS^2 \to M \times \SS^2$.
By construction, $\Psi$ maps each $\{x\}\times\SS^2$ conformally to the corresponding
centre leaf $\cF^c_x$.
\end{proof}

It follows from Proposition~\ref{p.uniformisation2} that we may write
$$
\Psi^{-1} \circ f \circ \Psi:M\times\SS^2 \to M \times \SS^2,
\quad (x,v) \mapsto (g(x),\tilde\zeta_x(v))
$$
where $g:M\to M$ is the Anosov homemorphism in \eqref{eq.defg}, and each $\tilde\zeta_x:\SS^2\to\SS^2$
is a conformal automorphism of the sphere, that is, a Möbius transformation.
Let $h_f:M\to M$ be the conjugacy between $g$ and $g_0$, as in \eqref{eq.conjugahyp}. Then
$$
(h_f \times\Id)^{-1} \circ \Psi^{-1} \circ f \circ \Psi \circ (h_f\times\Id):M\times\SS^2 \to M \times \SS^2,
\quad (x,v) \mapsto (g_0(x),\zeta_x(v))
$$
with $\zeta_x=\tilde\zeta_{h_f(x)}$. This completes the proof of Theorem~\ref{theorem_sphere}.

\bibliographystyle{alpha}  
\bibliography{bib}  

\end{document}